\newcommand{\be}{\begin{equation}}
\newcommand{\ee}{\end{equation}}
\newcommand{\bp}{\begin{proof}}
\newcommand{\ep}{\end{proof}}
\newcommand{\bel}{\begin{equation}\label}
\newcommand{\eeq}{\end{equation}}
\newcommand{\bea}{\begin{eqnarray}}
\newcommand{\eea}{\end{eqnarray}}
\newcommand{\bee}{\begin{eqnarray*}}
\newcommand{\eee}{\end{eqnarray*}}
\newcommand{\ben}{\begin{enumerate}}
\newcommand{\een}{\end{enumerate}}
\newcommand{\R}{\mathbb R}
\newcommand{\Z}{\mathbb Z}
\newcommand{\japa}{\langle x\rangle}
\newcommand{\sgn}{{\rm sgn}}
\renewcommand{\u}{\mathfrak{u}}
\newcommand{\N}{\mathbb{N}}
\newcommand{\hil}{\mathcal{H}}
\newcommand{\ji}{\langle}
\newcommand{\jd}{\rangle}
\newtheorem{theorem}{Theorem}[section]
\newtheorem*{theorem-a}{Theorem A}
\newtheorem*{theorem-b}{Theorem B}
\newtheorem{lemma}[theorem]{Lemma}
\newtheorem{proposition}[theorem]{Proposition}
\newtheorem*{remark-a}{Remark A}
\newtheorem{remarks}{Remarks}[section]
\theoremstyle{definition}
\newtheorem{definition}[theorem]{Definition}
\numberwithin{equation}{section}
\title[On properties of solutions to the BO equation]{On decay and  regularity properties of solutions to the Benjamin-Ono equation}
\author{Felipe Linares}
\address[F. Linares]{IMPA\\
Estrada Dona Castorina 110\\
22460-320, Rio de Janeiro, RJ\\Brazil}
\email{linares@impa.br}
\author{Gustavo Ponce}
\address[G. Ponce]{Department  of Mathematics\\
University of California\\
Santa Barbara, CA 93106\\
USA.}
\email{ponce@math.ucsb.edu}
\keywords{Benjamin-Ono equation, decay estimates, asymptotic dynamics}
\subjclass{35Q53, 35B40}
\begin{document}
\thanks{F.L. \!was partially supported by CNPq grant 310329\!/\!2023-0 and FAPERJ grant E\--26\!/\!200465\!/\! 2023. G.P. was partially supported by a Simons Foundation collaboration grant for mathematicians.}.


\begin{abstract} 
 We study  persistence properties of the solution of the Benjamin-Ono equation in weighted Sobolev spaces.  Roughly, we  show that for $\beta<7/2$, the solution $u(x,t)$ of the BO remains in the space $L^2(|x|^{2\beta} dx)$   if and only if its data $u(x,0)$ belongs to this space and it is regular enough, i.e. $u_0\in H^{\beta}(\R)$. 
\end{abstract}
\maketitle

\section{Introduction}

This work is concerned with the Initial Valued Problem (IVP) associated to the Benjamin-Ono (BO) equation
\begin{equation}\label{BO}
\begin{cases}
\partial_tu +\hil\partial_x^2u+u\partial_xu=0,\\
u(x,0)=u_0(x),
\end{cases}
\end{equation}
where $\hil$ denotes the Hilbert transform,
\begin{equation}
\label{hita}
\begin{split}
\hil f(x)&=\frac{1}{\pi} {\rm p.v.}\big(\frac{1}{x}\ast f\big)(x)\\
&=\frac{1}{\pi}\lim_{\epsilon\downarrow 0}\int\limits_{|y|\ge \epsilon} \frac{f(x-y)}{y}\,dy=(-i\,\sgn(\xi) \widehat{f}(\xi))^{\vee}(x).
\end{split}
\end{equation}

 The BO equation was first deduced by Benjamin
\cite{Ben} and Ono \cite{On} as  a model for long internal gravity
waves in deep stratified fluids. Later, it was also shown to be   a
completely integrable system (see \cite{AbSe} and
references therein). In particular, real solutions of \eqref{BO} satisfy
infinitely many conservation laws. These quantities provide an \it a priori \rm
estimate for the $H^{n/2}$-norm, $n\in \Z^+$, of the solution
$\,u=u(x,t)\,$ for \eqref{BO}.

The problem of finding the minimal regularity property, measured in the Sobolev scale $\,H^s(\R)=J^{-s/2}L^2(\R)=(1-\partial_x^2)^{-s/2}L^2(\R),\,s\in\R,$ 
required to guarantee that the IVP \eqref{BO}  is local  or global well-posed  in $\,H^s(\R)$  has been extensively studied, see  \cite{ABFS}, \cite{Io}, \cite{Po}, \cite{KoTz1},  \cite{KeKo}, \cite{Ta},  and \cite{BuPl}. In
\cite{IoKe} global well-posedness was established in $H^s(\R), \;s\geq 0$, see also \cite{MoPi}, \cite{IT} and for a complete review on the BO equation we refer to  \cite{Sa}.  In \cite{MoSaTv} it was proved that no well-posedness for IVP \eqref{BO}  can be established by an argument based only on the contraction principle argument. More recently, in \cite{KiLaVi}, using an argument based on the inverse scattering, it was shown that the IVP  \eqref{BO} is global well posed in $H^s(\R), \;s> -1/2$.

 To describe well-posedness results in weighted Sobolev spaces we introduce the notations:
 $$
Z_{s,r}=H^s(\mathbb R)\cap L^2(|x|^{2r}dx),\hskip10pt
\dot Z_{s,r}=Z_{s,r}\cap \{\,\widehat f(0)=0\},\hskip10pt s,\,r\in\mathbb R.
$$

In this direction, we have:

\begin{theorem-a}[\cite{FP,FLP}]\hskip15pt

\begin{enumerate}

\item[\rm(1)]\label{A-1}
If $u_0\in Z_{5,r},\;\,r\in(0,5/2)$, then  the solution of the IVP for the BO  eq. \eqref{BO} satisfies $\,u\in C([0,T] : Z_{5,r}).$ 
\item[\rm(2)]\label{A-2}
If there exist $\,0<t_1<t_2<T\,$ such that $\,u(\cdot,t_j) \in Z_{5,5/2},\;j=1,2,$ then 
 $\,\;\widehat u(0,t)=0$.
 \item[\rm(3)]\label{A-3}
 If $\,u_0\in \dot Z_{5,r},\,r\in[5/2,7/2)$, then  $\,u\in C([0,T] : \dot Z_{5,r})$.
 \item[\rm(4)]\label{A-4}
 Let $\,u\in C([0,T] : H^5(\mathbb R))$ be a solution of  the BO eq. If there exist $0<t_1<t_2<t_3<T$
 such that  $\,u(\cdot,t_j) \in Z_{5,7/2},\,j=1,2,3,$, then $\,u\equiv 0$.
 \item[\rm(5)]\label{A-5}
  If $\,u_0\in \dot Z_{5,4}\;$ and $\;\int x\,u_0(x)dx\neq 0$, then the solution  in  \eqref{A-3} satisfies 
 \begin{equation*}
\hskip15pt u(\cdot,t^*) \in \dot Z_{5,4},\hskip15pt t^*=-\frac{4}{\|u_0\|^2_2} \int_{\R} xu_0(x)dx.
\end{equation*}

 \end{enumerate}
 \end{theorem-a}
 
 \begin{remark-a}\hskip15pt
 \begin{enumerate}
 \item[\rm(1)]
 
Theorem A part \rm{(1)}  with $r\in (0,2]$, part \rm{(2)}  and \rm{(3)} with $r=3$, and part \rm{(4)} with $r=4$ were previously established in \cite{Io}.
 
 \item[\rm(2)]
The limited decay exhibited on solutions of the BO equation is due to the lack of regularity of the symbol of the operator $\sigma(\hil \partial_x^2)=i \sgn(\xi)\xi^2$ describing the dispersive relation. In particular, the solitons of the BO equation are of the form
 $$
 \eta(x)=\frac{4}{1+x^2},\;\;\;\;\;\;\eta_c(x-t)=c\eta (c(x-ct)),\;\;\;\;c>0.
 $$
 
 \item[\rm(3)] Part  \rm{(5)} of Theorem A tells us that the result described in part \rm{(4)} does not hold if one reduces the hypothesis at just two times.
 \end{enumerate}

 \end{remark-a}

\subsection{Main Results}

\begin{theorem}\label{thm1} \hskip15pt

\begin{enumerate}
\item[\rm(i)] Let $u\in C([0,T]:L^2(\R))\cap L^4([0,T]:L^2(\R))$ be the solution of the IVP for the BO  eq. \eqref{BO} obtained in \cite{MoPi}. If there exist $t_1, t_2\in [0,T]$, $t_1<t_2$, and $\alpha\in(0,1/2]$ such that
\begin{equation}\label{decay-a}
|x|^{\alpha} u(x,t_j)\in L^2(\R), \hskip10pt j=1,2.
\end{equation}
Then
\begin{equation}
u\in C([0,T]:H^{\alpha}(\R)).
\end{equation}

\item[\rm(ii)] Let $u\in C([0,T]:H^s(\R))$, $s>3/2$ be the solution of the IVP for the BO  eq. \eqref{BO}. If there exist $t_1, t_2\in [0,T]$, $t_1<t_2$, and $\alpha\in(s,7/2)$ such that
\begin{equation}\label{decay}
|x|^{\alpha} u(x,t_j)\in L^2(\R), \hskip10pt j=1,2.
\end{equation}
Then
\begin{equation}
u\in C([0,T]:H^{\alpha}(\R)).
\end{equation}
\end{enumerate}
\end{theorem}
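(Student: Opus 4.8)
The plan is to treat the statement as a \emph{decay-implies-regularity} result driven by a commutator identity between the spatial weight and the linear Benjamin--Ono group $U(t)=e^{-t\hil\partial_x^2}$, whose Fourier symbol is $e^{-it\xi|\xi|}$. Writing $D^\alpha$ for the operator with symbol $|\xi|^\alpha$, the elementary computation $\widehat{xU(t)f}(\xi)=i\partial_\xi\big(e^{-it\xi|\xi|}\widehat f(\xi)\big)$ and $\partial_\xi(\xi|\xi|)=2|\xi|$ give the exact first-order identity
\begin{equation*}
xU(t)f=U(t)\big[xf\big]+2t\,U(t)\big[Df\big],
\end{equation*}
which already exhibits the mechanism behind the theorem: one power of the weight is exchanged, at cost $O(t)$, for one derivative. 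Iterating this relation shows that for integer $k$ the top-order term of $x^{k}U(t)f$ is $c_k t^{k}U(t)[D^{k}f]$ with $c_k\neq0$, and by interpolation one obtains, for the fractional weight,
\begin{equation*}
|x|^\alpha U(t)f=U(t)\big[|x|^\alpha f\big]+c_\alpha\,t^{\alpha}\,U(t)\big[D^\alpha f\big]+R_\alpha(t)f,\qquad c_\alpha\neq0,
\end{equation*}
where the remainder $R_\alpha(t)f$ only involves mixed factors $|x|^{a}D^{b}$ with $b<\alpha$. Establishing this expansion rigorously over the full range $\alpha\in(0,7/2)$, via the Stein/Gagliardo characterization of $D^\alpha$ and a fractional Leibniz rule in the frequency variable, is one analytic pillar of the argument; the non-smoothness of $\xi|\xi|$ at the origin (already its second $\xi$-derivative jumps) is precisely what caps the admissible range at $7/2$ and demands care near $\xi=0$ once $\alpha>5/2$.

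Granting the expansion, the role of the two times becomes transparent. Propagating the solution from $t_1$ to $t_2$ by Duhamel's formula for \eqref{BO},
\begin{equation*}
u(t_2)=U(t_2-t_1)u(t_1)-\int_{t_1}^{t_2}U(t_2-\tau)\,\partial_x\big(\tfrac12 u^2\big)(\tau)\,d\tau,
\end{equation*}
I apply $|x|^\alpha$ and insert the commutator expansion with $\tau=t_2-t_1$. By hypothesis \eqref{decay} the left-hand side $|x|^\alpha u(t_2)$ lies in $L^2(\R)$; the term $U(t_2-t_1)[\,|x|^\alpha u(t_1)\,]$ lies in $L^2$ because $|x|^\alpha u(t_1)\in L^2$ and $U$ is unitary; and the remainder $R_\alpha$ together with the lower-order part of the weighted nonlinear term involve only $D^{b}u$ with $b<\alpha$ and weights $|x|^{a}$ with $a\le\alpha$, all of which are controlled at the previous stage of the induction (lower weights are free since $|x|^{a}\lesssim 1+|x|^\alpha$). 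Since $t_2-t_1\neq0$ and $c_\alpha\neq0$, one solves for the top-order term and concludes $D^\alpha u(t_1)\in L^2(\R)$, i.e.\ $u(t_1)\in H^\alpha(\R)$. Global well-posedness of \eqref{BO} in $H^\alpha$, forward and backward in time, then propagates this single-time regularity to $u\in C([0,T]:H^\alpha(\R))$. The necessity of \emph{two} times is now clear: decay at $t_1$ controls only the propagated weight term, and it is the additional hypothesis at $t_2$ that supplies the left-hand side needed to isolate $D^\alpha u(t_1)$. I would implement the whole scheme with bounded truncated weights $w_N\uparrow\japa^{\alpha}$, deriving all bounds uniformly in $N$ and passing to the limit, and I would climb from $s$ to $\alpha$ in increments of size at most one so that $R_\alpha$ and the nonlinear remainder never call on derivatives beyond the regularity already attained, the intermediate mixed norms being controlled by standard interpolation of weighted Sobolev norms.

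The principal obstacle, and where the bulk of the work lies, is the top-order part of the weighted nonlinear term $|x|^\alpha\int_{t_1}^{t_2}U(t_2-\tau)\,uu_x\,d\tau$. Because the Benjamin--Ono nonlinearity carries a derivative, a naive estimate after commuting $|x|^\alpha$ past the group demands $\|D^{\alpha+1}(u^2)\|_2$, one derivative more than the target regularity $H^\alpha$. Overcoming this requires exploiting structure rather than brute force: either the algebraic form of $u\partial_x u$ under the weight (an integration by parts that simultaneously lowers the weight power and removes the derivative, turning $\ji |x|^{2\alpha}u,uu_x\jd$ into a genuinely lower-order expression with no derivative on $u$), or the local smoothing properties of $U(t)$ applied to the inhomogeneous Duhamel term, both within the bootstrap so that the only $D^\alpha u$ that survives appears with a coefficient that is small for $|t_2-t_1|$ small and hence absorbable. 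For part~(i), where only $u\in C([0,T]:L^2)\cap L^4([0,T]:L^2)$ is available, the scheme is identical but simpler: since $\alpha\le 1/2<1$ only the first-order commutator is needed and no iteration on $\alpha$ occurs, while the weighted nonlinear term is estimated through the $L^4([0,T]:L^2)$ bound and the smoothing of the Molinet--Pilod solution. The value $1/2$ is exactly the largest decay for which these low-regularity nonlinear estimates close, which is why the higher range is isolated under the hypothesis $s>3/2$, where $H^s\hookrightarrow L^\infty$ furnishes the algebra and product estimates that drive the induction.
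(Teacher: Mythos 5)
Your strategy is genuinely different from the paper's (the paper never touches the free group $U(t)$: it runs weighted energy estimates directly on the equation, harvesting the Kato-type smoothing term $-\int(D^{1/2}v)^2\phi'\,dx$ from Proposition \ref{GENARG} to gain $1/2$ derivative per step, iterating in half-derivative increments with truncated weights and the commutator estimates \eqref{I1}--\eqref{I2}). Unfortunately your plan has a gap at its load-bearing point. The first-order identity $xU(t)f=U(t)[xf]+2tU(t)[Df]$ is correct, but the fractional expansion
\begin{equation*}
|x|^\alpha U(t)f=U(t)\big[|x|^\alpha f\big]+c_\alpha t^{\alpha}U(t)\big[D^\alpha f\big]+R_\alpha(t)f,\qquad c_\alpha\neq 0,
\end{equation*}
is asserted, not proved, and it cannot be obtained ``by interpolation'': interpolation between the integer cases yields one-sided norm inequalities, not an \emph{identity} with an isolatable, invertible top-order term. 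Everything in your scheme hinges on being able to \emph{solve for} $U(t)[D^\alpha f]$ after subtracting the other terms, which requires exactly such an identity together with a remainder that is strictly lower order in both weight and derivative simultaneously; the known results in this direction (Fonseca--Ponce \cite{FP}, Nahas--Ponce \cite{NP} for NLS) are upper bounds of the form $\||x|^\alpha U(t)f\|_2\lesssim \||x|^\alpha f\|_2+\|f\|_{H^\alpha}$, i.e.\ they go the wrong way for your purpose. Moreover, for $\alpha\geq 5/2$ the third $\xi$-derivative of the symbol $e^{-it\xi|\xi|}$ produces delta functions at $\xi=0$ (this is the source of the $\widehat f(0)$ term in \eqref{Id1c} and of Theorem A parts (2) and (4)), so the expansion does not even have the claimed form there, and your argument does not engage with the $\widehat u(0,t)=0$ constraint that the paper must invoke in its Case 5.

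The second gap is the nonlinear Duhamel term, which you correctly identify as ``the principal obstacle'' but then dispose of with an unexamined ``either/or.'' Commuting $|x|^\alpha$ past $U(t_2-\tau)$ in $\int_{t_1}^{t_2}U(t_2-\tau)\partial_x(u^2/2)\,d\tau$ places $\alpha+1$ derivatives on $u^2$, a full derivative beyond the target; neither the proposed integration by parts nor an appeal to inhomogeneous local smoothing is carried out, and it is precisely in making this loss recoverable --- through the weighted smoothing identity of Proposition \ref{GENARG}, the $A_2$ bounds \eqref{A_2-precise} for the weighted Hilbert transform, and the commutators $[D^{1/2};\japa^{\theta}]$ --- that the entire technical content of the paper's proof resides. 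As written, the proposal is a plausible program outline whose two central analytic claims are unestablished, so it does not constitute a proof.
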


\begin{remarks}\hskip10pt

\begin{enumerate} 
\item[\rm(1)] Roughly, Theorem \ref{thm1} affirms that for $\beta<7/2$, the $L^2(|x|^{2\beta} dx)$-norm of the solution remains bounded  in the existence time interval  if and only if the data is regular enough $u_0\in H^{\beta}(\R)$. This kind of result is characteristic of the dispersive equation. In particular, it was proven in \cite{IsLiPo} for the generalized Korteweg-de Vries (gKdV) equation and in \cite{LP} for the intermediate long wave (ILW) equation.  The proof combines an iterative argument with  weighted energy estimates which are reminiscence of the Kato smoothing effect \cite{Ka}. Nevertheless, in these cases, the gKdV and the ILW eqs,  the proofs are significantly simpler. For the KdV the smoothing effect provides a gain of one derivative (local operator), and in the case of the ILW this smoothing effect is described by a smooth pseudo-differential symbol of order $1/2$. However, for the BO equation the gain of $1/2$ derivatives involves an operator with a non-smooth symbol. This highly 
complicates the necessary commutative relations between  fractional weights and  fractional non-smooth operator. In addition, the range of the possible decay is {\rm a priori} limited.

\item[\rm(2)] The  gap $s\in(1/2,3/2]$ between the results in parts {\rm(i)} and {\rm(ii)} of Theorem \ref{thm1} may  seem  to be a consequence of the  arguments used in their proofs. However,  a similar  gap appears  in the following problem : consider the IVP associated to the Burgers equation
\begin{equation}\label{burgers}
\begin{cases}
\partial_tu+u\partial_xu=0,\\
u(x,0)=u_0(x).
\end{cases}
\end{equation}
Question : Given $u_0\in \mathcal H^s(\R)$ for which $s\ge0$ one has an estimate of the form
\begin{equation*}
\sup_{[0,T]} \|u(t)\|_{s,2}\le c_0 \|u_0\|_{s,2} \hskip15pt\text{for some}\hskip5pt T>0?
\end{equation*}

The answer is only for $s=0$ and $s>\frac32$.

\item[\rm(3)] The upper bound $s=\frac72$ in Theorem \ref{thm1} part {\rm(ii)} is related to previous results.

\item[\rm(4)] The results in Theorem \ref{thm1} extend to the generalized BO equation, i.e. higher order non-linearity are considered,  in the range where  
well-posedness is known.

\end{enumerate}
\end{remarks}

\section{Preliminaries}
In this section we will describe several inequalities and identities useful in our analysis.

\subsection{Identities}

\begin{lemma}
Let $f$ be a suitable real function. The following identities hold
\begin{align}
x\hil\partial_x^2 f&= \hil\partial_x^2(xf)+2\hil\partial_xf; \label{Id1}\\
x\hil\partial_x f &= \hil\partial_x(xf)+\hil f; \label{Id2}\\
xD^{1/2}f&=D^{1/2}(xf)+\frac12D^{-1/2}\hil f;\label{Id3}\\
xD^{3/2}f&=D^{3/2}(xf)+\frac32D^{1/2}\hil f;\label{Id4}\\
x^2\hil\partial_x^2 f&= \hil\partial_x^2(x^2f)+4\hil\partial_x(xf)+2\hil f;\label{Id1b}\\
x^3\hil\partial_x^2 f&= \hil\partial_x^2(x^3f)+6\hil\partial_x(x^2f)+6\hil (xf)+\widehat{f}(0);\label{Id1c}\\
[\hil ;x]f&=0\;\;\;\;\;\Leftrightarrow \;\;\;\int f(x)dx=\hat{f}(0)=0;\label{Id1d}.
\end{align}
\end{lemma}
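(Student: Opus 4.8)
The plan is to reduce all seven identities to a single principle on the Fourier side, where every operator involved is a multiplier: $\hil$ corresponds to $-i\sgn(\xi)$, $\partial_x$ to $i\xi$, the operator $D=\hil\partial_x$ to $|\xi|$, $\hil\partial_x^2$ to $i\xi|\xi|$, and $D^{s}$ to $|\xi|^{s}$. Since multiplication by $x$ acts on the Fourier transform as $\pm i\,\partial_\xi$ (the sign being that fixed by the convention in \eqref{hita}), the Leibniz rule yields, for any multiplier $m(D)$, the commutator relation
\[
x\,m(D)f \;=\; m(D)(xf)\;+\;c\,m'(D)f,\qquad c=\pm i,
\]
understood at the level of symbols and valid wherever $m$ is differentiable. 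Thus each identity reduces to differentiating one explicit symbol in $\xi$ and recognizing the operator whose symbol is $m'(\xi)$.

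Carrying this out for the four first-order identities, \eqref{Id2} comes from $\partial_\xi|\xi|=\sgn(\xi)$, whose associated operator is $\hil$; \eqref{Id1} from $\partial_\xi(i\xi|\xi|)=2i|\xi|$, giving $2\hil\partial_x$; and \eqref{Id3}, \eqref{Id4} from $\partial_\xi|\xi|^{1/2}=\tfrac12|\xi|^{-1/2}\sgn(\xi)$ and $\partial_\xi|\xi|^{3/2}=\tfrac32|\xi|^{1/2}\sgn(\xi)$, whose associated operators are $\tfrac12 D^{-1/2}\hil$ and $\tfrac32 D^{1/2}\hil$. The numerical constants $2,\tfrac12,\tfrac32$ and the overall sign are then fixed by the convention, reproducing the right-hand sides of \eqref{Id1}--\eqref{Id4}; the four computations are formally identical and differ only in the exponent of $|\xi|$.

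For the higher identities \eqref{Id1b} and \eqref{Id1c} I would iterate the same principle, using that $x^{2}$ and $x^{3}$ correspond to $\partial_\xi^{2}$ and $\partial_\xi^{3}$ (up to constants) and expanding $\partial_\xi^{k}\big(i\xi|\xi|\,\widehat f\big)$ by Leibniz; the intermediate terms reassemble into $\hil\partial_x(x^{2}f)$, $\hil(xf)$, etc., with the stated integer coefficients. The constant $\widehat f(0)$ in \eqref{Id1c} is produced by the third $\xi$-derivative of $i\xi|\xi|$, which is a Dirac mass at the origin, $\partial_\xi^{3}(\xi|\xi|)\sim\delta(\xi)$, so that this term contributes exactly $\widehat f(0)$ and nothing more. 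Equivalently, the integer-order identities follow purely algebraically from the two elementary commutators $[x,\partial_x]=-1$ and $x\hil g-\hil(xg)=\tfrac1\pi\int g$ together with the commutator Leibniz rule, the latter being immediate from \eqref{hita} by the telescoping ${\rm p.v.}\!\int\frac{(x-y)g(y)}{x-y}\,dy=\int g$. This same telescoping settles \eqref{Id1d}: the commutator $[\hil;x]f$ equals the constant $\tfrac1\pi\widehat f(0)$, which vanishes identically if and only if $\widehat f(0)=\int f=0$.

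The only genuine obstacle, and the point requiring care throughout, is the non-smoothness of the symbols at $\xi=0$: differentiating $\sgn(\xi)$ or $|\xi|$ to high enough order produces Dirac masses, and differentiating $|\xi|^{1/2}$ produces the singular factor $|\xi|^{-1/2}$, so the Leibniz manipulations must be justified distributionally. One must check that the Dirac contributions are annihilated in \eqref{Id1}, \eqref{Id2} and \eqref{Id1b}, where the constant they would produce is proportional to $\int\partial_x^{k}f=0$, while in \eqref{Id1c} exactly one such contribution survives and yields the $\widehat f(0)$ term. For \eqref{Id3}--\eqref{Id4} one must instead exploit that $f$ is \emph{suitable}, so that $\widehat f$ decays and vanishes at the origin to the order needed for $x\,D^{1/2}f$ and $D^{-1/2}\hil f$ to be well defined and for the singular symbol $|\xi|^{-1/2}$ to be integrable against $\widehat f$. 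Once these domain and distributional issues are settled, the remaining algebra is routine.
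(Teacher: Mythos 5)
The paper states this lemma without proof (the identities are taken as elementary, essentially imported from \cite{FP,FLP}), so there is no in-paper argument to compare against; your Fourier-multiplier route --- $x\,m(D)f=m(D)(xf)+c\,m'(D)f$ with $c=\pm i$, applied to the symbols $|\xi|$, $i\xi|\xi|$, $|\xi|^{1/2}$, $|\xi|^{3/2}$ and iterated for $x^{2}$, $x^{3}$ --- is the natural and correct mechanism, and you correctly locate both the origin of the $\widehat f(0)$ term in \eqref{Id1c} (the Dirac mass in $\partial_\xi^{3}(\xi|\xi|)$) and the distributional caveats at $\xi=0$. The equivalent real-variable route you mention, built on $[x,\partial_x]=-1$ and the telescoping identity $x\hil g-\hil(xg)=\tfrac1\pi\int g$, is in fact the cleanest way to get \eqref{Id1}, \eqref{Id2}, \eqref{Id1b}, \eqref{Id1c} and \eqref{Id1d} in one stroke, since the moment corrections $\int \partial_x^{k}f$, $\int x\partial_x^{2}f$ vanish and only $\int x^{2}\partial_x^{2}f=2\widehat f(0)$ survives.

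The one substantive weakness is that you defer the determination of every sign and constant to ``the convention'' and then assert that the computation ``reproduces the right-hand sides.'' That deferral hides the only nontrivial content of the lemma. If you actually carry out the bookkeeping with the convention fixed by \eqref{hita} (so $\widehat{xf}=i\partial_\xi\widehat f$ and $\sigma(\hil)=-i\sgn\xi$, hence $\sigma(D)=|\xi|$), the correction terms come out as $-\hil f$, $-2\hil\partial_x f$, $-\tfrac12D^{-1/2}\hil f$, $-\tfrac32D^{1/2}\hil f$, and in \eqref{Id1b}, \eqref{Id1c} the middle coefficients are $-4$ and $-6$ while the final constant in \eqref{Id1c} is $\tfrac2\pi\widehat f(0)$; equivalently, the real-variable identity is $x\hil g=\hil(xg)+\tfrac1\pi\int g$, which forces $x\hil\partial_x f=\hil\partial_x(xf)-\hil f$. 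So the printed right-hand sides cannot all be recovered by ``choosing the convention'': several signs (and the normalization of $\widehat f(0)$) in the stated lemma differ from what the computation gives. This is harmless downstream, because every one of these correction terms enters the energy estimates only through its absolute value, but your proof should do the computation rather than appeal to it --- both because that is where the work is, and because doing it would have revealed the discrepancy. A second, minor point: the operator with symbol $\sgn\xi$ is $i\hil$, not $\hil$; you implicitly fold this $i$ into $c$, which is fine, but it is exactly the factor that decides the sign you declined to compute.
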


\subsection{Inequalities}

\begin{lemma}\label{CC}
For any $j, k \in \mathbb N\cup\{0\},\,j+k\geq 1$ and any $p\in(1,\infty)$
 \begin{equation} \label{I1}
 \| \partial_x^j \big[\mathcal H;\eta\big]\partial_x^kf\|_p\leq c_{p,j,k}\|\eta^{(j+k)}\|_{\infty} \|f\|_p.
 \end{equation}
 \end{lemma}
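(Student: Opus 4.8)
The plan is to treat $T_{j,k}:=\partial_x^j[\mathcal H;\eta]\partial_x^k$ as a bilinear operator in the pair $(\eta,f)$ and to extract from the non-smooth symbol of $\mathcal H$ exactly the cancellation that lets the $j+k$ derivatives be counted as $j+k$ derivatives falling on $\eta$. First I would record the elementary identity
\[
\partial_x[\mathcal H;\eta]=[\mathcal H;\eta']+[\mathcal H;\eta]\partial_x,
\]
which uses only that $\mathcal H$ commutes with $\partial_x$, and iterate it to the Leibniz-type expansion $\partial_x^j[\mathcal H;\eta]=\sum_{i=0}^{j}\binom{j}{i}[\mathcal H;\eta^{(i)}]\partial_x^{\,j-i}$. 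Applying this after the factor $\partial_x^k$ reduces \eqref{I1} to the single-commutator estimate $\|[\mathcal H;a]\partial_x^{m}g\|_p\le c_{p,m}\|a^{(m)}\|_\infty\|g\|_p$ for $m\ge1$, since with $a=\eta^{(i)}$ and $m=j+k-i$ one has $\|a^{(m)}\|_\infty=\|\eta^{(j+k)}\|_\infty$ uniformly in $i$.

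For the base case $m=1$ I would invoke the first Calderón commutator theorem: after one integration by parts, $[\mathcal H;a]\partial_x$ has the Calderón--Zygmund kernel $c\,(a(x)-a(y))/(x-y)^2$, which is bounded on $L^p(\R)$, $1<p<\infty$, with operator norm controlled by $\|a'\|_\infty$. The purely multiplicative remainders generated along the way, all of the form $\mathcal H(a^{(\ell)}g)$, are dispatched by the $L^p$-boundedness of $\mathcal H$ (M. Riesz) together with $\|a^{(\ell)}g\|_p\le\|a^{(\ell)}\|_\infty\|g\|_p$.

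For $m\ge2$ the kernel obtained by transferring all $m$ derivatives onto $(a(y)-a(x))/(x-y)$ contains pieces such as $(a(y)-a(x))/(x-y)^{m+1}$ that are individually too singular, so no term-by-term estimate survives; here I would pass to the Fourier side. Writing the frequency of $a$ as $\mu=\xi-\zeta$ and that of $g$ as $\zeta$, the operator $[\mathcal H;a]\partial_x^{m}$ has bilinear symbol proportional to $(\sgn\xi-\sgn\zeta)\,\zeta^{m}$ paired with $\widehat a(\mu)$. This symbol is supported where $\xi$ and $\zeta$ have opposite signs, on which set $|\mu|=|\xi-\zeta|=|\xi|+|\zeta|\ge|\zeta|$, whence
\[
\bigl|(\sgn\xi-\sgn\zeta)\,\zeta^{m}\bigr|\le 2|\zeta|^{m}\le 2|\mu|^{m}.
\]
Thus, writing the output as a constant times $T(a^{(m)},g)$, the symbol of $T$ (divided by $(i\mu)^{m}$) is bounded and homogeneous of degree $0$, and $(i\mu)^m\widehat a(\mu)$ is the transform of $a^{(m)}$. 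The only singularities of this $0$-homogeneous symbol lie on the hyperplanes $\{\xi=0\}$ and $\{\zeta=0\}$, where the factors $\zeta^m$ (and, in the general case, $\xi^j$) provide vanishing of high order and where the remaining $\sgn$-type jump is of the one-dimensional Calderón--Zygmund nature already carried by $\mathcal H$. This is precisely the symbol class covered by the Calderón commutator theorem and its multilinear Coifman--Meyer extension, which yield the mapping $L^\infty\times L^p\to L^p$ with the factor $\|a^{(m)}\|_\infty$, i.e. the desired single-commutator estimate.

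The main obstacle is exactly this high-order, balanced case: the bound with the single top-order factor $\|\eta^{(j+k)}\|_\infty$ on the right is false for each individual kernel term and holds only because of the opposite-sign cancellation encoded in $\sgn\xi-\sgn\zeta$. Equivalently, it is the genuinely multilinear Calderón--Coifman--Meyer content—not the mere $L^p$-boundedness of $\mathcal H$—that must be used, and the delicate point is to legitimize the boundedness of a $0$-homogeneous multiplier whose symbol is non-smooth across the coordinate hyperplanes; once the bound $|\mu|^{m}$ for the symbol is in place, the verification of the remaining hypotheses away from these directions is routine.
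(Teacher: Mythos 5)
Your Leibniz reduction is correct and worth keeping: the identity $\partial_x[\mathcal H;\eta]=[\mathcal H;\eta']+[\mathcal H;\eta]\partial_x$ and its binomial iterate validly reduce \eqref{I1} to the one--parameter family $\|[\mathcal H;a]\partial_x^{m}g\|_p\le c\|a^{(m)}\|_\infty\|g\|_p$ (the $m=0$ term arising when $k=0$ is trivial from the $L^p$--boundedness of $\mathcal H$), and the case $m=1$ is exactly Calder\'on's first commutator theorem, as you say. Note, for calibration, that the paper itself gives no proof of this lemma: it attributes $j+k=1$ to \cite{Ca} and the general case to \cite{BCo}, with an alternative proof in \cite{DaGaPo}; so the question is whether your argument is self-contained. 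It is not, and the gap sits exactly where you place the word ``routine''. The cancellation you isolate is the right one (on the support of $\sgn\xi-\sgn\zeta$ one has $|\xi-\zeta|=|\xi|+|\zeta|\ge|\zeta|$), but the resulting $0$--homogeneous symbol $\sigma=(\sgn\xi-\sgn\zeta)\,\zeta^m(\xi-\zeta)^{-m}$ is \emph{not} in the Coifman--Meyer class: on its support it equals $2\,\sgn(\xi)\,(-1)^m\big(|\zeta|/(|\xi|+|\zeta|)\big)^m$ and vanishes off it, so it has a jump of nonzero, locally constant height across the entire hyperplane $\{\xi=0\}$ (where the ratio equals $1$). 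This destroys the Mikhlin-type bound $|\partial^\alpha\sigma|\lesssim(|\xi-\zeta|+|\zeta|)^{-|\alpha|}$ already at $|\alpha|=1$, so the Coifman--Meyer multiplier theorem does not apply; nor can the discontinuity be split off as an output multiplier $\sgn\xi$ times a smooth bilinear symbol, since the natural factorization produces $\mathcal H a^{(m)}$, which is not controlled in $L^\infty$ by $\|a^{(m)}\|_\infty$. What is needed at this point is precisely the higher-order commutator theorem of \cite{BCo} --- i.e.\ the lemma itself with $j=0$ --- so for $m\ge2$ the argument is circular rather than complete.

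A self-contained way to close the case $m\ge2$ is to stay on the kernel side: putting all $m$ derivatives on the kernel $(a(y)-a(x))/(x-y)$ gives $[\mathcal H;a]\partial_x^m g(x)=c_m\int R_m(x,y)(x-y)^{-m-1}g(y)\,dy$ with $R_m(x,y)=a(x)-\sum_{l=0}^{m}a^{(l)}(y)(x-y)^l/l!$. The identities $R_m=R_{m-1}-a^{(m)}(y)(x-y)^m/m!$ and $|R_{m-1}(x,y)|\le\|a^{(m)}\|_\infty|x-y|^m/m!$ show this is a Calder\'on--Zygmund kernel with constants $\lesssim\|a^{(m)}\|_\infty$, and $T1,\,T^{*}1\in{\rm BMO}$ follows from the boundedness of $\mathcal H$ from $L^\infty$ to ${\rm BMO}$ applied to $a^{(m)}$; the $T1$ theorem then yields the $L^p$ bound. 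This is essentially the route of \cite{DaGaPo}, and it is genuine work --- the opposite-sign cancellation you identified reappears there as the Taylor-remainder structure of $R_m$ --- not a verification of routine hypotheses.
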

 The case $j+k=1$ corresponds to Calder\'on first commutator estimate \cite{Ca}. The general case of \eqref{I1} was established in \cite{BCo}.  For a different proof see \cite{DaGaPo}.

 \begin{lemma}[\cite{Li}]
 If $\alpha\in[0,1]$, $\beta\in (0,1-\alpha]$, and $1<p<\infty$, then
 \begin{equation}\label{I2}
 \|D^{\alpha}[D^{\beta};\eta] D^{1-(\alpha+\beta)}f\|_p\le c\|\partial_x\eta\|_{\infty}\|f\|_p.
 \end{equation}
 \end{lemma}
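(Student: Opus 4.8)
The plan is to reduce the inequality to a bilinear Fourier-multiplier estimate and then run a Littlewood--Paley paraproduct argument. Write $\gamma=1-(\alpha+\beta)$, so that $\alpha,\gamma\ge 0$, $\beta>0$, and $\alpha+\beta+\gamma=1$. Letting $\mu$ denote the frequency carried by $\eta$ and $\zeta$ the frequency carried by $f$, so that the output frequency is $\xi=\mu+\zeta$, the Fourier transform of $D^{\alpha}[D^{\beta};\eta]D^{\gamma}f$ is
\begin{equation*}
|\xi|^{\alpha}\int\big(|\xi|^{\beta}-|\zeta|^{\beta}\big)\,\widehat{\eta}(\mu)\,|\zeta|^{\gamma}\,\widehat{f}(\zeta)\,d\zeta ,\qquad \xi=\mu+\zeta .
\end{equation*}
Replacing $\widehat{\eta}(\mu)$ by $\widehat{\eta'}(\mu)/(i\mu)$ — which is what forces the gradient of $\eta$, and hence the factor $\|\eta'\|_{\infty}$ on the right-hand side, to appear — exhibits the operator as a bilinear multiplier $B_{m}(\eta',f)$ with symbol
\begin{equation*}
m(\mu,\zeta)=\frac{|\mu+\zeta|^{\alpha}\big(|\mu+\zeta|^{\beta}-|\zeta|^{\beta}\big)|\zeta|^{\gamma}}{i\,\mu}.
\end{equation*}
The claim thus becomes $\|B_{m}(\eta',f)\|_{p}\le c\,\|\eta'\|_{\infty}\|f\|_{p}$.

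I would then split according to the relative size of the input frequencies: $|\mu|\ll|\zeta|$ (low--high), $|\mu|\sim|\zeta|$ (resonant), and $|\mu|\gg|\zeta|$ (high--low), using a standard dyadic partition. The basic analytic input is the mean-value bound $\big||\mu+\zeta|^{\beta}-|\zeta|^{\beta}\big|\lesssim |\mu|\,\max(|\mu|,|\zeta|)^{\beta-1}$, together with its derivative analogues, which is precisely what absorbs the singular factor $1/\mu$ in $m$. In the low--high and resonant regions one has $|\mu+\zeta|\sim|\zeta|$, so after cancellation $m$ is a symbol of order $\alpha+\beta+\gamma-1=0$; a direct verification of the Coifman--Meyer derivative bounds then identifies these contributions with Bony's paraproduct $\sum_{j}(S_{j}\eta')(P_{j}f)$ and the diagonal paraproduct $\sum_{j}(P_{j}\eta')(\widetilde P_{j}f)$, both bounded on $L^{p}$ with constant controlled by $\|\eta'\|_{\infty}$ (using $\|\eta'\|_{BMO}\le\|\eta'\|_{\infty}$ for the diagonal term).

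The delicate region is the high--low one, $|\mu|\gg|\zeta|$, where $\eta'$ carries the output frequency; here $|\mu+\zeta|\sim|\mu|$ and the symbol satisfies $|m(\mu,\zeta)|\approx (|\zeta|/|\mu|)^{\gamma}$. When $\gamma>0$ the gain $(|\zeta|/|\mu|)^{\gamma}$ provides the off-diagonal decay needed to sum the dyadic pieces and to recover the correct pairing $\|\eta'\|_{\infty}\|f\|_{p}$, rather than the $\|\eta'\|_{p}\|f\|_{\infty}$ that a generic Coifman--Meyer paraproduct would yield. I expect the main obstacle to be exactly this region together with the non-smoothness of $|\cdot|^{\beta}$ at $\zeta=0$ and at $\mu+\zeta=0$: a naive paraproduct there costs $\|\eta'\|_{L^{p}}$, and only the exact cancellation of the commutator, paired with the precise balancing $\alpha+\gamma=1-\beta$ of the derivatives placed on either side, restores the needed gain. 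At the endpoint $\beta=1-\alpha$, where $\gamma=0$, the decay disappears entirely and the piece becomes a genuine Calderón-commutator-type operator, to be treated through the non-smooth commutator bound in the spirit of \eqref{I1} (Calderón--Bajšanski--Coifman); this is what returns the $L^{\infty}$ control of $\eta'$, and it is this balancing that makes $\beta=1-\alpha$ admissible while forbidding any larger $\beta$, in line with the difficulty emphasized in the Remarks following Theorem~\ref{thm1}.
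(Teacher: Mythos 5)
The paper does not actually prove this lemma: it is quoted from the literature, with the proof deferred entirely to Proposition 3.10 of \cite{Li}. So what you are comparing against is a citation, and your outline is, in broad strokes, the strategy of the cited source itself: reduce to a bilinear multiplier acting on $(\eta',f)$, decompose dyadically into low--high, resonant and high--low interactions, use the mean-value bound on $|\mu+\zeta|^{\beta}-|\zeta|^{\beta}$ to absorb the singular factor $1/\mu$, and treat separately the region where $\eta'$ carries the dominant frequency. The approach is the right one and you correctly locate where the difficulty sits.

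As a proof, however, it has two concrete gaps. First, in the resonant region $|\mu|\sim|\zeta|$ it is \emph{not} true that $|\mu+\zeta|\sim|\zeta|$: the output frequency $\xi=\mu+\zeta$ can be arbitrarily small there, and the factors $|\xi|^{\alpha}$ and $|\xi|^{\beta}$ are not smooth at $\xi=0$, so the Coifman--Meyer derivative bounds you invoke fail on the diagonal $\mu+\zeta=0$. The symbol remains bounded (since $|\xi|^{\alpha+\beta}|\zeta|^{\gamma}/|\mu|\lesssim 1$ when $|\mu|\sim|\zeta|$), but the high--high-to-low contribution must be handled by exploiting the frequency localization of the output (e.g. $\|D^{\sigma}P_{\le j}g\|_{p}\lesssim 2^{j\sigma}\|g\|_{p}$ for frequency-localized $g$), not by a generic multiplier theorem. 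Second, and more seriously, at the endpoint $\beta=1-\alpha$ (i.e. $\gamma=0$) you reduce the high--low piece to ``a Calder\'on-commutator-type bound in the spirit of \eqref{I1}.'' But \eqref{I1} concerns integer derivatives of $[\hil;\eta]$ and does not yield bounds for $D^{\alpha}[D^{1-\alpha};\eta]$ with fractional $\alpha$; that fractional statement is essentially \eqref{I2} itself at the endpoint, so the argument is circular precisely where the real content of Li's Proposition 3.10 lies (and where uniformity as $\gamma\downarrow 0$ must be tracked). For $\gamma>0$ your off-diagonal decay $(|\zeta|/|\mu|)^{\gamma}$ makes the high--low sum completable, so away from the endpoint the sketch is sound; at the endpoint it reduces the lemma to an unproved statement of the same strength.
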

 
For the proof see Proposition 3.10 in \cite{Li}.

\begin{lemma} [\cite{Li}]
For $\alpha\in (0,1)$ and $p\in (1, \infty)$ one has that
\begin{equation}\label{I6a} 
\|[D^{\alpha};\eta]f\|_p\le c\|D^{\alpha}\eta\|_{\infty}  \|f\|_p,
\end{equation}
and 
\begin{equation}\label{I6b}
\|[D^{\alpha};\eta]f\|_p\le c \|\eta\|_{\infty}\|D^{\alpha}f\|_p.
\end{equation}
\end{lemma}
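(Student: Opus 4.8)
The plan is to prove both commutator bounds by Littlewood--Paley analysis rather than through the pointwise kernel of $D^\alpha$. For $0<\alpha<1$ one has the p.v. representation $D^\alpha g(x)=c_\alpha\,\mathrm{p.v.}\int_{\R}\frac{g(x)-g(x-y)}{|y|^{1+\alpha}}\,dy$, so the commutator is the integral operator with kernel $c_\alpha\,(\eta(x)-\eta(x-y))\,|y|^{-1-\alpha}$ acting on $f$. A purely pointwise estimate is hopeless: H\"older control of $\eta$ only bounds this kernel by $|y|^{-1}$, which is not $L^p$-bounded, so the cancellation carried by $\eta(x)-\eta(x-y)$ together with the oscillation of $f$ must be used. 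I would therefore run a Bony paraproduct decomposition $\eta f=T_\eta f+T_f\eta+R(\eta,f)$, with $T_\eta f=\sum_k S_{k-2}\eta\,\Delta_k f$, $T_f\eta=\sum_k S_{k-2}f\,\Delta_k\eta$, and $R$ the resonant block, and split the commutator accordingly.

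The low-high block is the heart of the matter. Each summand is $[D^\alpha;S_{k-2}\eta]\Delta_k f$, a commutator of $D^\alpha$ with a coefficient smooth on scale $2^{-k}$ against an input of frequency $\sim 2^k$, i.e.\ away from the origin where $|\xi|^\alpha$ is smooth. A first-order symbolic expansion produces a principal term of order $2^{k(\alpha-1)}$ hitting $\partial_x S_{k-2}\eta$, a gain of one derivative that exactly compensates the $2^{k(1-\alpha)}$ growth of $\|\partial_x S_{k-2}\eta\|_\infty$. For \eqref{I6a} one measures $\eta$ through $\|D^\alpha\eta\|_\infty\gtrsim\sup_k 2^{k\alpha}\|\Delta_k\eta\|_\infty$ and sums in $k$ via the Littlewood--Paley square function, which is where $1<p<\infty$ enters, returning $\|D^\alpha\eta\|_\infty\|f\|_p$. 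For \eqref{I6b} the same expansion is reorganised so that the $\alpha$ derivatives are charged to $f$, controlled by $\|D^\alpha f\|_p\simeq\|(\sum_k 2^{2k\alpha}|\Delta_k f|^2)^{1/2}\|_p$, while $\eta$ enters only through $\|\eta\|_\infty$.

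The high-low and resonant blocks are where the commutator structure is indispensable and must not be estimated term by term. In $T_f\eta$ and $R(\eta,f)$ the two contributions $D^\alpha(\eta f)$ and $\eta D^\alpha f$ are individually of size $\sim 2^{k\alpha}$ and cancel to leading order; only after the subtraction does one recover a summable series, which I would again close by the vector-valued (square-function) maximal inequality valid precisely for $1<p<\infty$. Reassembling the three blocks then yields the two stated bounds.

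I expect the main obstacle to be exactly the lack of smoothness of the multiplier $|\xi|^\alpha$ at $\xi=0$: $D^\alpha$ is not a Calder\'on--Zygmund operator near zero frequency, so the low-frequency interactions (the resonant block and the tails of $S_{k-2}$) cannot be treated by the symbolic calculus used for the low-high block and demand separate, more delicate bookkeeping. The second, quantitative obstacle is keeping the allocation of the $\alpha$ derivatives scrupulously consistent with whether $\eta$ is measured in $\dot W^{\alpha,\infty}$, as in \eqref{I6a}, or merely in $L^\infty$, as in \eqref{I6b}: a careless placement immediately costs extra derivatives and breaks the estimate, so the cancellation in the comparable-frequency regime is essential. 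This is the technical core for which I would ultimately invoke the fractional Leibniz machinery of \cite{Li}.
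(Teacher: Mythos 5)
The paper offers no argument of its own here---it simply defers to Proposition-level results in \cite{Li}---so the only thing to assess is whether your plan would actually close. For \eqref{I6a} your paraproduct outline is the standard route and is consistent in spirit with \cite{Li}, although as written it is a plan rather than a proof: none of the three blocks is actually estimated, and the low--high block in particular still needs the square-function bookkeeping you allude to in order to sum in $k$.

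The genuine gap is in your treatment of \eqref{I6b}. You claim that in the high--low block $T_f\eta=\sum_k S_{k-2}f\,\Delta_k\eta$ the two contributions $D^{\alpha}(\eta f)$ and $\eta D^{\alpha}f$ ``are individually of size $\sim 2^{k\alpha}$ and cancel to leading order.'' They do not: in that block $D^{\alpha}$ acts on an output localized at frequency $\sim 2^k$ (the frequency of $\eta$), producing a factor $2^{k\alpha}$ on $\Delta_k\eta$, whereas the matching piece of $\eta D^{\alpha}f$ is $\sum_k\Delta_k\eta\,S_{k-2}(D^{\alpha}f)$, in which $D^{\alpha}$ sees only the low frequencies $2^{j}$, $j\le k-2$, of $f$. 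The two terms are of incomparable sizes $2^{k\alpha}$ versus $2^{j\alpha}$, so there is no leading-order cancellation, and the factor $2^{k\alpha}$ must be charged to $\eta$, which is impossible if $\eta$ is measured only in $L^{\infty}$. This is not a repairable bookkeeping issue: taking $\eta(x)=e^{iNx}$ and $f$ a fixed Schwartz function with $\widehat f$ supported in $|\xi|\le 1$ gives $\|[D^{\alpha};\eta]f\|_p\gtrsim N^{\alpha}\|f\|_p$ while $\|\eta\|_{\infty}\|D^{\alpha}f\|_p$ stays bounded, so \eqref{I6b} cannot hold for arbitrary bounded $\eta$ and no decomposition will prove it. To make your argument (and the lemma) correct you must either retain a term $c\|D^{\alpha}\eta\|_{\infty}\|f\|_p$ on the right-hand side of \eqref{I6b} to absorb the high--low block, or restrict to the class of weights $\eta$ actually used in the paper (e.g.\ $\langle x\rangle_N^{\theta}$, for which $\|D^{\alpha}\eta\|_{\infty}$ and $\|\partial_x\eta\|_{\infty}$ are under control by Lemma \ref{lem00}), and say explicitly which of these you are doing.
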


\begin{lemma}\label{lem1}
 Let $p\in[1,\infty]$ and $s\geq 0$, with $s$ not an odd integer in the case $p=1$. Then 
\begin{equation}\label{I5a}
\|J^{s} (fg)\|_p \leq c (\|f\|_{p_1}\|J^{s}g\|_{p_2} +\|g\|_{q_1}\|J^{s}f\|_{q_2}),
\end{equation}
and
\begin{equation}\label{I5b}
\|D^{s} (fg)\|_p \leq c (\|f\|_{p_1}\|D^{s}g\|_{p_2} +\|g\|_{q_1}\|D^{s}f\|_{q_2})
\end{equation}
with $1/p_1+1/p_2=1/q_1+1/q_2=1/p$.
\end{lemma}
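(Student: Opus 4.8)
This is a fractional Leibniz rule of Kato--Ponce type, and the plan is to establish it by a Littlewood--Paley decomposition together with Bony's paraproduct, carrying out the homogeneous estimate \eqref{I5b} in detail and indicating the parallel (and slightly easier) argument for the inhomogeneous bound \eqref{I5a}. Fix a dyadic partition of unity $1=\sum_j\psi(2^{-j}\xi)$ with $\psi$ supported in an annulus, let $\Delta_j$ denote the associated frequency projections and $S_j=\sum_{k\le j}\Delta_k$, and split
\begin{equation*}
fg=\Pi_{\rm lh}(f,g)+\Pi_{\rm hl}(f,g)+\Pi_{\rm hh}(f,g),
\end{equation*}
where $\Pi_{\rm lh}=\sum_j S_{j-2}f\,\Delta_j g$ is the low--high part, $\Pi_{\rm hl}$ the symmetric high--low part, and $\Pi_{\rm hh}=\sum_{|j-k|\le1}\Delta_j f\,\Delta_k g$ the diagonal high--high part. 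The guiding principle is that on a frequency block localized at $2^\ell$ the operator $D^s$ acts like multiplication by $2^{\ell s}$, so applying $D^s$ to each paraproduct should place the $s$ derivatives on whichever factor carries the dominant frequency.

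For $\Pi_{\rm lh}$ the output frequency is comparable to $2^j$, so $D^s\Pi_{\rm lh}$ behaves like $\sum_j S_{j-2}f\,\Delta_j(D^s g)$. Using the square--function characterization of the $L^p$ norm for $1<p<\infty$, the pointwise bound $\sup_j|S_{j-2}f|\lesssim Mf$ by the Hardy--Littlewood maximal function (or simply $\lesssim\|f\|_\infty$ when $p_1=\infty$), and H\"older's inequality with $1/p=1/p_1+1/p_2$, one arrives at the contribution $\|f\|_{p_1}\|D^sg\|_{p_2}$. The part $\Pi_{\rm hl}$ is handled identically with the roles of $f$ and $g$ interchanged and the exponents $1/p=1/q_1+1/q_2$, giving $\|g\|_{q_1}\|D^sf\|_{q_2}$. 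These two pieces already reproduce the full right--hand side of \eqref{I5b}.

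The main obstacle is $\Pi_{\rm hh}$: here the two factors share a comparable frequency $2^j$ while their product may be supported at any lower frequency $2^\ell$, $\ell\le j+O(1)$, so one cannot simply read off where the derivative lands. The remedy is to localize the output and write $D^s\Pi_{\rm hh}=\sum_\ell\Delta_\ell D^s\sum_{j:\,2^j\gtrsim2^\ell}\Delta_jf\,\tilde\Delta_jg$; on each surviving term $D^s$ contributes $2^{\ell s}$, and since $s\ge0$ and $\ell\le j+O(1)$ one has $2^{\ell s}\lesssim2^{js}2^{(\ell-j)s}$ with the factor $2^{(\ell-j)s}$ summable in $\ell\le j$ precisely because $s>0$ (the case $s=0$ being the trivial H\"older bound). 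Transferring the $2^{js}$ onto one factor and combining the resulting off--diagonal geometric decay with the square function and H\"older then controls this term by the right--hand side of \eqref{I5b} as well. This is the only step where $s\ge0$ is genuinely used.

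For the inhomogeneous estimate \eqref{I5a} the same scheme applies with $D^s$ replaced by $J^s$; because the associated multiplier is smooth and of order zero at low frequencies, the low--frequency block is trivial and the high--frequency analysis is identical. Finally the endpoints $p\in\{1,\infty\}$, where the square function degenerates, are reached through the Coifman--Meyer bilinear multiplier theorem applied to the symbols encoding $|\xi+\eta|^s-|\xi|^s-|\eta|^s$ (respectively their inhomogeneous analogues); the hypothesis that $s$ not be an odd integer when $p=1$ is exactly what secures the symbol regularity away from the diagonal required by that endpoint argument.
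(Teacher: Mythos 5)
The paper offers no proof of this lemma: it is imported from the literature, with the range $1<p<\infty$ attributed to \cite{GO}, the case $p=p_1=p_2=q_1=q_2=\infty$ to \cite{BoLi}, and the case $p=p_2=q_2=1$ to \cite{OW}. Your paraproduct argument is in substance the standard proof of the interior range $1<p<\infty$: the low--high and high--low pieces via the maximal function, the square-function characterization and H\"older, and the high--high piece via the off-diagonal factor $2^{(\ell-j)s}$, summable for $s>0$, is exactly the Grafakos--Oh scheme, and that part of the sketch is sound (in one dimension and for $p\ge 1$ the extra constraint $s>1/p-1$ that arises for $p<1$ is vacuous, so $s>0$ indeed suffices for the diagonal term).

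The genuine gap is at the endpoints $p=\infty$ and $p=1$, which are part of the stated hypotheses and which you dispatch in one sentence by invoking the Coifman--Meyer theorem. That cannot work: bilinear Calder\'on--Zygmund operators are in general not bounded into $L^\infty$ or into $L^1$ at these endpoints (the natural targets are $BMO$ and weak-type spaces), which is exactly why the square function ``degenerates'' there and why the $L^\infty\times L^\infty\to L^\infty$ case of Kato--Ponce remained open until \cite{BoLi} and the $L^1$ case until \cite{OW}; both require arguments of a genuinely different nature, in particular a finer treatment of the high--high paraproduct than any square-function or bilinear-multiplier bound provides. Your explanation of the restriction ``$s$ not an odd integer when $p=1$'' as a symbol-regularity condition away from the diagonal is also not correct: $|\xi|^s$ fails to be smooth at the origin for every non-even $s$, so that cannot single out the odd integers. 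The relevant obstruction is rather that for odd integer $s$ one has $D^s=\pm\hil\partial_x^s$, so the estimate would force an $L^1$ bound for the Hilbert transform, which fails. In short, your proposal proves the lemma for $1<p<\infty$ but does not reach the endpoint cases included in the statement; to cover those you should either restrict the claim or cite \cite{BoLi} and \cite{OW} as the paper does.
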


Lemma \ref{lem1} with $p\in(1,\infty)$ was proved in \cite{GO}.
The case $p=p_1=p_2=q_1=q_2=\infty$ was established in \cite{BoLi}. 
The case $p=p_2=q_2=1$ was settled in \cite{OW}. For earlier versions of this result see \cite{KaPo} and \cite{KPV93}.

\begin{lemma}[\cite{LP}]\label{lem2-interpolation} Let $p\in (1,\infty)$. Let $a,b,c,d\in\R$. Then for $\theta\in[0,1]$ 
\begin{equation}\label{interpol-complete}
\|\japa^{\theta a+(1-\theta)c}J^{\theta b+(1-\theta)d} f\|_p \le c_p\|\japa^{a}J^{b} f\|_p^{\theta}\|\japa^{c}J^{d} f\|_p^{(1-\theta)}.
\end{equation}
\end{lemma}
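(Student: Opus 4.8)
The plan is to prove this by \emph{complex interpolation} in the spirit of Stein's analytic interpolation theorem, realized through the Hadamard three-lines lemma and $L^p$-duality. Write $\alpha(z)=za+(1-z)c$ and $\beta(z)=zb+(1-z)d$, so that $\alpha(\theta),\beta(\theta)$ are the target exponents and $z=1,0$ recover the two endpoint data. By density it suffices to treat $f,g$ Schwartz, and we may assume the right-hand side is finite, i.e. both $F:=\japa^{a}J^{b}f$ and $G:=\japa^{c}J^{d}f$ lie in $L^p$. Fixing $g$ with $\|g\|_{p'}=1$, I would consider, for $z$ in the closed strip $\{z\in\C:0\le\Re z\le 1\}$, the holomorphic function
\begin{equation*}
\Phi_\delta(z)=e^{\delta z(z-1)}\int_{\R}\big(\japa^{\alpha(z)}J^{\beta(z)}f\big)\,\overline{g}\,dx,
\end{equation*}
where the Gaussian factor $e^{\delta z(z-1)}$ (with $\delta>0$ fixed) forces decay along vertical lines and legitimizes the three-lines estimate. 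Note $\Phi_\delta(\theta)$ equals, up to the harmless factor $e^{\delta\theta(\theta-1)}\in[e^{-\delta/4},1]$, the pairing $\int \japa^{\alpha(\theta)}J^{\beta(\theta)}f\,\overline g$, whose supremum over $g$ is the left-hand side.

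Next I would identify the boundary values. On $\Re z=1$, writing $z=1+iy$, a reordering gives
\begin{equation*}
\japa^{\alpha(1+iy)}J^{\beta(1+iy)}f=\japa^{iy(a-c)}\,\big(\japa^{a}J^{iy(b-d)}\japa^{-a}\big)\,F,
\end{equation*}
and symmetrically on $\Re z=0$ the same expression appears with $a,c$ interchanged acting on $G$. Since $\japa^{it}$ is a modulus-one multiplier, it is an isometry on $L^p$, so the boundary estimates reduce entirely to controlling the \emph{weight-conjugated imaginary powers}
\begin{equation*}
\big\|\japa^{\rho}J^{is}\japa^{-\rho}\big\|_{L^p\to L^p}\le M(s),\qquad 1<p<\infty,\ \rho,s\in\R .
\end{equation*}
This is the \textbf{main obstacle}, and it is exactly the analytic incarnation of the difficulty flagged in the paper: fractional weights do not commute with (here, purely imaginary) powers of $J$. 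I would handle it by splitting $\japa^{\rho}J^{is}\japa^{-\rho}=J^{is}+[\japa^{\rho},J^{is}]\japa^{-\rho}$. The first term is controlled by the Mikhlin–Hörmander theorem, whose constant for the symbol $(1+\xi^2)^{is}$ grows only polynomially in $s$. For the commutator, pseudodifferential (SG/scattering) calculus shows that $[\japa^{\rho},J^{is}]$ has principal symbol $\sim \partial_\xi(1+\xi^2)^{is}\cdot\partial_x\japa^{\rho}$, which decays one order in $\xi$ and one order in $x$; after composing with $\japa^{-\rho}$ one obtains an $L^p$-bounded (indeed smoothing) operator with polynomial dependence on $s$. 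Hence $M(s)\le C_{p,\rho}(1+|s|)^{N}$, which is admissible growth for the three-lines lemma.

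With this Key Lemma in hand, the Gaussian factor makes $\sup_{y}|\Phi_\delta(iy)|\le C_\delta\|G\|_p$ and $\sup_{y}|\Phi_\delta(1+iy)|\le C_\delta'\|F\|_p$ finite, and the three-lines lemma yields
\begin{equation*}
|\Phi_\delta(\theta)|\le \big(C_\delta\|G\|_p\big)^{1-\theta}\big(C_\delta'\|F\|_p\big)^{\theta}.
\end{equation*}
Undoing the Gaussian normalization at the fixed value $\delta=1$ and taking the supremum over $\|g\|_{p'}=1$ then gives $\|\japa^{\alpha(\theta)}J^{\beta(\theta)}f\|_p\le c_p\|\japa^{c}J^{d}f\|_p^{1-\theta}\|\japa^{a}J^{b}f\|_p^{\theta}$, and a density argument removes the Schwartz restriction. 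I expect the genuine work to lie entirely in the Key Lemma: establishing the $L^p$-boundedness of $\japa^{\rho}J^{is}\japa^{-\rho}$ with polynomial (hence admissible) growth in $s$, since the rest is a bookkeeping application of the three-lines principle.
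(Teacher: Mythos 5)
The paper does not prove this lemma: it is imported verbatim from \cite{LP}, so there is no in-text argument to compare against. Your proposal follows what is in fact the standard route for this family of inequalities (Stein's analytic interpolation via the three-lines lemma and duality, as in the $L^2$ precursor of Nahas--Ponce \cite{NP} and its $L^p$ extension in \cite{LP}), and its structure is sound: the boundary factorization $\japa^{\alpha(1+iy)}J^{\beta(1+iy)}f=\japa^{iy(a-c)}\bigl(\japa^{a}J^{iy(b-d)}\japa^{-a}\bigr)\japa^{a}J^{b}f$ is correct, the Gaussian factor legitimately tames the polynomial growth in $\Ima z$, and you have correctly isolated the crux, namely the bound $\|\japa^{\rho}J^{is}\japa^{-\rho}\|_{L^p\to L^p}\le C_{p,\rho}(1+|s|)^{N}$. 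Two remarks. First, that Key Lemma does hold for every $\rho\in\R$, but you do not need the SG/scattering calculus: the Bessel kernel $G_{is}$ of $J^{is}$ is a Calder\'on--Zygmund kernel near the diagonal and decays exponentially off it, so the conjugated kernel $\japa^{\rho}(x)G_{is}(x-y)\japa^{-\rho}(y)$ splits into a CZ part (where $\japa^{\rho}(x)\japa^{-\rho}(y)$ is bounded for $|x-y|\le 1$) plus a part dominated by $C(1+|s|)^{N}e^{-|x-y|/4}$, handled by Young's inequality; this is more elementary and makes the polynomial dependence on $s$ transparent. Note that the naive alternative of viewing $J^{is}$ as acting on $L^p(\japa^{\rho p}dx)$ would impose the $A_p$ restriction $-1/p<\rho<1/p'$, so the commutator or kernel splitting is genuinely needed for general $a,c$. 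Second, the closing density step deserves a sentence: for $f$ with only $\japa^{a}J^{b}f,\japa^{c}J^{d}f\in L^p$ one must produce Schwartz approximations converging in both weighted norms simultaneously (truncation in $x$ followed by mollification works, but it is not automatic). With those points filled in, the argument is complete and consistent with the proof strategy of the cited source.
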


Along the proof of Theorem \ref{thm1} we will use the following result:

\begin{lemma}\label{lem00} For any $a\in (0,1/2)$
\begin{equation}
\label{imp-1}
D^{1/2} \japa^a\in L^{\infty}(\R).
\end{equation}
Moreover, for the truncated version $\japa_N$ of $\japa$  defined in \eqref{we-N} below it follows that for any $a\in (0,1/2)$ there exists $C_a$ independent of $N$ such that
\begin{equation}
\label{imp-2}
\|D^{1/2} \japa^a_N\|_{\infty}\leq C_a.
\end{equation}
\end{lemma}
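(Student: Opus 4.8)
The plan is to exploit the singular-integral representation of the fractional Laplacian. Since $D=(-\p_x^2)^{1/2}$ we have $D^{1/2}=(-\p_x^2)^{1/4}$, and for a sufficiently regular real function $g$ the operator admits the pointwise formula
\begin{equation*}
D^{1/2}g(x)=c\,{\rm p.v.}\int_{\R}\frac{g(x)-g(y)}{|x-y|^{3/2}}\,dy,
\end{equation*}
the exponent $3/2$ arising as $1+2\cdot\tfrac14$ in dimension one. I would first observe that, for $g=\japa^a$, the right-hand side is convergent (in the principal-value sense near the diagonal and absolutely at infinity) precisely when $a<1/2$: as $|y|\to\infty$ the integrand decays like $|y|^{a-3/2}$, which is integrable if and only if $a-3/2<-1$. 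This is exactly where the restriction $a\in(0,1/2)$ enters, reflecting the borderline nature of \eqref{imp-1}.

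With this in hand, setting $g=\japa^a$ and substituting $y=x+t$, I would bound $I(x):=D^{1/2}g(x)/c$ uniformly in $x$ by splitting the $t$-integral into three regions. On the near-diagonal region $|t|\le 1$ I would Taylor expand $g(x+t)$ to second order; the odd linear term is annihilated by the principal value, while the quadratic remainder is controlled by $\sup|g''|\le C\langle s\rangle^{a-2}\le C$, giving $\int_{|t|\le 1}|t|^{1/2}\,dt<\infty$ independently of $x$. On the inner far region $1\le|t|\le|x|/2$ the mean value theorem together with $|g'(s)|\le C\langle s\rangle^{a-1}\le C\japa^{a-1}$ yields $C\japa^{a-1}\int_{1\le|t|\le|x|/2}|t|^{-1/2}\,dt\le C\japa^{a-1/2}\le C$, again using $a<1/2$. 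On the outer region $|t|\ge\max(1,|x|/2)$ the crude estimate $|g(x)-g(x+t)|\le C|t|^{a}$, valid there because both $\japa$ and $\langle x+t\rangle$ are comparable to $\langle t\rangle\lesssim|t|$, reduces matters to $\int_{|t|\ge 1}|t|^{a-3/2}\,dt<\infty$. Combining the three regions gives \eqref{imp-1}.

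For the truncated statement \eqref{imp-2} I would repeat the identical three-region estimate for $g_N=\japa_N^a$. The only additional input needed is that the truncation in \eqref{we-N} preserves, with constants independent of $N$, the two derivative bounds used above, namely $|\p_x g_N|\le C\japa^{a-1}$ and $|\p_x^2 g_N|\le C\japa^{a-2}$, together with $g_N\le C\japa^a$; this is immediate from the definition of $\japa_N$. Since none of the constants appearing in the three regions depend on $N$, the resulting bound is a uniform $C_a$, which is precisely \eqref{imp-2}.

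The main obstacle I expect is not the region-by-region estimation, which is routine once set up, but the rigorous justification of the pointwise formula for the slowly growing, non-integrable function $\japa^a$: one must argue in $\mathcal S'$, for instance by regularizing the symbol $|\xi|^{1/2}$ or by approximating $\japa^a$ with compactly supported truncations and passing to the limit using the uniform bounds just obtained, to confirm that the bounded function produced by the singular integral genuinely represents the distribution $D^{1/2}\japa^a$. The delicate point is that the cancellation in the numerator $g(x)-g(y)$ is essential, since neither term is separately integrable at infinity; hence the principal-value structure near the diagonal and the comparison of $\japa$ with $\langle x+t\rangle$ in the far regions must be retained throughout.
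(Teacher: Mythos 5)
Your argument is correct, and for the uniform truncated bound \eqref{imp-2} it is exactly what the paper does; but for \eqref{imp-1} you take the route the paper only mentions in passing (its closing remark that ``the formula \eqref{def-0} can be used to give a direct proof''), whereas the paper's actual proof of \eqref{imp-1} is a Fourier-side argument. There, one uses homogeneity to get $\widehat{|x|^a}(\xi)=c_1|\xi|^{-(1+a)}$, hence $D^{1/2}|x|^a=c_3|x|^{a-1/2}$ exactly, and then treats $\lambda_a=\japa^a-|x|^a$ as an $L^1$ perturbation whose $D^{1/2}$ image has the compensating singularity $-c_3|x|^{a-1/2}$ at the origin; boundedness of the sum follows from this cancellation. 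The trade-off is clear: the paper's computation is shorter and exhibits the precise borderline structure at $a=1/2$ (the exact power $|x|^{a-1/2}$), but it does not transfer to the truncated weights, which is why the paper switches to \eqref{def-0} for \eqref{imp-2}. Your three-region real-variable estimate handles both statements at once and makes the uniformity in $N$ transparent, since the only inputs are the derivative bounds $|\partial_x^k\japa_N^a|\le C\japa^{a-k}$ of \eqref{we-3}, which hold with constants independent of $N$. Your region-by-region bounds check out (the p.v.\ cancellation of the linear term near the diagonal, the factor $\japa^{a-1}|x|^{1/2}\le\japa^{a-1/2}$ in the intermediate range, and the tail integral $\int_{|t|\ge1}|t|^{a-3/2}\,dt$ requiring exactly $a<1/2$), and the distributional justification of \eqref{def-0} for the polynomially growing function $\japa^a$ that you flag as the delicate point is indeed the only technical issue --- one the paper itself glosses over with ``some simple computations,'' so you are not at a disadvantage there.
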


It is clear that the result in \eqref{imp-1} extends to all $a,b \in (0,1) $ with $b$ instead of $1/2$ and $b>a$.

\begin{proof} For $a \in (0,1)$ one has that $|x|^a,\,\japa^a\in \mathcal S'(\R)$. By homogeneity, it follows that
$$
\widehat{|x|^a}(\xi)=c_1 |\xi|^{-(1+a)},\;\;\;\;\;\widehat{D^{1/2} |x|^a}(\xi)=c_2 |\xi|^{-(1/2+a)}
$$
and
$$
D^{1/2} |x|^a=c_3 |x|^{a-1/2}.
$$
Observing that
$$
\lambda_a(x)=\japa^a-|x|^a=
\begin{cases} 
\begin{aligned}
&\frac{a}{2} |x|^{a-2}+O(|x|^{a-4}),\;\;\;\;\;|x|\to \infty,\\
&1-|x|^a,\hskip69pt \;\;|x|\to 0,
\end{aligned}
\end{cases}
$$
one has that $\lambda_a \in L^1(\R)$ with
$$
D^{1/2}\lambda_a(x)=
\begin{cases} 
\begin{aligned}
& c\, |x|^{a-5/2},\;\;\;\;\;\;\;\;\;\;\;\;\;\;\;\;|x|\to \infty,\\
&-c_3 |x|^{a-1/2},\hskip30pt \; |x|\to 0,
\end{aligned}
\end{cases}
$$
which yields \eqref{imp-1}. Now using the formula 
\begin{equation}
\label{def-0}
D^{1/2}g(x)=c\; \text{p.v.}\int\frac{g(x)-g(y)}{|x-y|^{3/2}}\,dy,\;\;\;\;\;\;\text{p.v.=principal value},
\end{equation}
we can estimate $D^{1/2} \japa^a_N$ and \eqref{imp-2} follows after some simple computations. In fact, the formula \eqref{def-0} can be used to give a direct proof of \eqref{imp-1}.
\end{proof}

 \smallskip

Next, we will recall the definition of the $A_p$ condition. We shall restrict here to
the 1-dimensional case $\R$ (see \cite{M})
\begin{definition} A non-negative function $w\in L^1_{\rm loc}(\R)$ satisfies the $A_p$ inequality with $1<p<\infty$
if
\begin{equation}\label{Ap}
\underset {Q\,\,\rm{interval}}{\sup} \Big(\frac{1}{|Q|}\int_Q w\Big)\Big(\frac{1}{|Q|}\int_Q w^{1-p'}\Big)^{p-1}=c_p(w)<\infty,
\end{equation}
where $1/p+1/p'=1$.
\end{definition}

\begin{lemma}[\cite{HMW}]
The condition \eqref{Ap} is necessary and sufficient for the boundedness of the
 Hilbert transform $\hil$ in $L^p(w(x)\,dx)$, i.e.
  \begin{equation}\label{I3} 
( \int_{\R} |\hil f|^p \,w(x)\,dx)^{1/p} \le c_p^{*}(\int_{\R} |f|^p \,w(x)\,dx)^{1/p}.
 \end{equation}
\end{lemma}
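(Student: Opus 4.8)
The plan is to establish the stated equivalence by treating the two implications separately, since they are of quite different character. The necessity of \eqref{Ap} for \eqref{I3} is elementary, while the sufficiency — that $w\in A_p$ forces the weighted bound — carries all the weight and is proved by comparing $\hil$ with the Hardy--Littlewood maximal operator $M$. For the \emph{necessity} direction, I would fix an interval $Q$ and test \eqref{I3} on $f=w^{1-p'}\chi_Q$; writing $\sigma=w^{1-p'}$, one checks the algebraic identity $\sigma^pw=\sigma$, so that $\int_{\R}|f|^pw\,dx=\int_Q\sigma\,dy$. For $x$ in an adjacent translate $Q'$ of $Q$ the kernel $1/(x-y)$ has constant sign and size comparable to $1/|Q|$ as $y$ ranges over $Q$, whence
\[
|\hil f(x)|\gtrsim\frac{1}{|Q|}\int_Q\sigma\,dy,\qquad x\in Q'.
\]
Feeding this lower bound into the left side of \eqref{I3} and the computed value into the right side gives $\big(\tfrac{1}{|Q|}\int_Q w^{1-p'}\big)^{p-1}\big(\tfrac{1}{|Q'|}\int_{Q'}w\big)\lesssim 1$ for every pair of adjacent equal intervals, and a routine covering argument upgrades this to the genuine $A_p$ bound $c_p(w)<\infty$ on a single interval.

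For the \emph{sufficiency} direction I would follow the Hunt--Muckenhoupt--Wheeden route through a good-$\lambda$ inequality, which uses two external inputs: (a) Muckenhoupt's theorem that $w\in A_p$ implies $M$ is bounded on $L^p(w\,dx)$; and (b) the self-improving properties of Muckenhoupt weights, namely $A_p\subset A_\infty$, together with the fact that every $A_\infty$ weight is doubling and satisfies a reverse H\"older inequality. Writing $\hil^{*}$ for the maximal truncated Hilbert transform, the core estimate is that there is $\theta>0$ such that for all small $\gamma>0$ and all $\lambda>0$,
\[
w\big(\{\hil^{*}f>2\lambda,\ Mf\le\gamma\lambda\}\big)\le C\,\gamma^{\theta}\,w\big(\{\hil^{*}f>\lambda\}\big).
\]
I would prove this by localizing to the Whitney intervals of the open set $\{\hil^{*}f>\lambda\}$, applying the standard Calder\'on--Zygmund pointwise bound on each such interval, and then converting Lebesgue measure of the exceptional subset into $w$-measure via the reverse H\"older / $A_\infty$ property. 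Integrating this inequality against $p\lambda^{p-1}\,d\lambda$ yields $\|\hil^{*}f\|_{L^p(w)}\le C\|Mf\|_{L^p(w)}$, and input (a) then gives $\|Mf\|_{L^p(w)}\le C\|f\|_{L^p(w)}$, which is stronger than \eqref{I3}. An alternative I would keep in reserve is to reduce to $p=2$ by Rubio de Francia extrapolation and prove the $A_2$ case through Lerner's pointwise sparse domination of $\hil$, but the good-$\lambda$ argument is the most direct.

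The hard part will be the good-$\lambda$ inequality itself: this is the one place where the full force of the $A_p$ hypothesis enters, precisely in the passage from Lebesgue measure on a Whitney interval to the weighted measure $w$, for which the reverse H\"older self-improvement of $A_p$ weights (equivalently, the openness of the $A_p$ classes) is indispensable and supplies the favorable power $\theta$. By comparison, the necessity direction, the final integration in $\lambda$, and the invocation of the maximal-function bound are all routine once this estimate is in hand.
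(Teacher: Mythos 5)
The paper offers no proof of this lemma: it is quoted verbatim as the Hunt--Muckenhoupt--Wheeden theorem \cite{HMW}, so there is nothing internal to compare your argument against. That said, your sketch is precisely the classical proof of that theorem and the architecture is sound: necessity by testing \eqref{I3} on $f=\sigma\chi_Q$ with $\sigma=w^{1-p'}$ (your identity $\sigma^p w=\sigma$ is correct, since $p(1-p')+1=1-p'$) and exploiting the sign-definiteness and size $\sim|Q|^{-1}$ of the kernel on an adjacent interval $Q'$; sufficiency via the good-$\lambda$ inequality $w(\{\hil^{*}f>2\lambda,\ Mf\le\gamma\lambda\})\le C\gamma^{\theta}w(\{\hil^{*}f>\lambda\})$, whose proof through Whitney decomposition, the Calder\'on--Zygmund pointwise estimate, and the $A_\infty$/reverse H\"older passage from Lebesgue to $w$-measure is the Coifman--Fefferman route, combined with Muckenhoupt's maximal theorem. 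Two standard technicalities deserve a word if you write this out in full. First, in the necessity step you must know $\sigma\in L^1_{\rm loc}$ before $f=\sigma\chi_Q$ is an admissible test function; the usual fix is to test with $\min(\sigma,n)\chi_Q$ and let $n\to\infty$. Also, the ``routine covering argument'' is really the combination of the adjacent-interval estimate with the adjacent doubling $w(Q)\le Cw(Q')$ obtained by testing $f=\chi_{Q'}$; it is worth making that explicit. Second, integrating the good-$\lambda$ inequality against $p\lambda^{p-1}\,d\lambda$ only yields $\|\hil^{*}f\|_{L^p(w)}\le C\|Mf\|_{L^p(w)}$ after one knows the left-hand side is a priori finite, so one should first run the argument for truncated weights or for $\min(\hil^{*}f,N)$ and pass to the limit. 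With those patches your outline is a complete and correct proof, matching the cited source.
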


In the case $p = 2$, a previous characterization of $w$ in \eqref{I3} was found in \cite{HS}.
In this case, one has 
\begin{equation}
\label{A_2-precise}
\langle x\rangle^{\beta},|x|^\beta\in A_2\;\;\;\;\Leftrightarrow \;\;\;\beta\in (-1/2,1/2)
\end{equation}
In \cite{Pe} it was shown that the estimate \eqref{I3} holds with $c_2^{*}\leq c\,c_2(w)$ for some fix $c=c(2)$. This can be used to show that all our estimates will not depend on the truncation of the weights.

\subsection{General Argument}

In this subsection we prove a result 
which is employed several times along  our analysis.

\begin{proposition}[General Argument]\label{GENARG} If $\phi\in C^2(\R)$, $\phi''\in L^{\infty}(\R)$, then 
\begin{equation}\label{A1}
\int\hil\partial_x^2v(x)\, v(x)\phi(x)\,dx=-\int (D^{1/2}v(x))^2\phi'(x)\,dx +M_0(t)
\end{equation}
with
\begin{equation}\label{A2}
|M_0(t)|\le c\|\phi''\|_{\infty}\|v\|_2^2.
\end{equation}
\end{proposition}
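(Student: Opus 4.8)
The plan is to relate the quadratic form $\int \hil\partial_x^2 v\, v\phi\,dx$ to the half-derivative $D^{1/2}v$ by exploiting the algebraic identity $\hil\partial_x^2 = -D^{1/2}\hil\partial_x D^{1/2}$, which follows from the symbols $\sigma(\hil\partial_x) = |\xi|$ and $\sigma(D^{1/2}) = |\xi|^{1/2}$, so that $\sigma(\hil\partial_x^2)=i\,\sgn(\xi)\xi^2 = -|\xi|^{1/2}\cdot(-i\,\sgn(\xi)|\xi|)\cdot|\xi|^{1/2}$. More precisely I would write $\hil\partial_x^2 = D^{1/2}\,\hil\partial_x\, D^{1/2}$ up to a controlled sign and check this at the symbol level first. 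The heuristic is that the ``worst'' (second-order) part of the operator, when tested against $v\phi$, produces exactly a first-order derivative falling on $\phi$ after integrating by parts, and this is what generates the good term $-\int (D^{1/2}v)^2\phi'$.

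\textbf{Main steps.} First I would use the self-adjointness of $D^{1/2}$ and $\hil$ to move one factor of $D^{1/2}$ off of $v$ and onto $v\phi$, writing
\begin{equation*}
\int \hil\partial_x^2 v\, (v\phi)\,dx = \int \hil\partial_x(D^{1/2}v)\, D^{1/2}(v\phi)\,dx.
\end{equation*}
Next I would introduce the commutator by writing $D^{1/2}(v\phi)=\phi\,D^{1/2}v + [D^{1/2};\phi]v$, splitting the integral into a main term and a commutator term. For the main term, $\int \hil\partial_x(D^{1/2}v)\,\phi\,D^{1/2}v\,dx$, I would set $w=D^{1/2}v$ and integrate by parts in the standard way: since $\int \hil\partial_x w\cdot \phi w\,dx = -\tfrac12\int (\hil\partial_x\phi')\,w^2 + \text{(antisymmetric part)}$, I would use the antisymmetry of $\hil\partial_x$ and the Leibniz rule to extract precisely $-\int (D^{1/2}v)^2\phi'\,dx$ plus a remainder controlled by $\|\phi''\|_\infty$ (via the Calder\'on commutator estimate \eqref{I1} with $j=k=1$, or by a direct integration by parts moving $\partial_x$ onto $\phi$).

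\textbf{The error terms.} The remainder $M_0(t)$ collects two contributions: the commutator term $\int \hil\partial_x(D^{1/2}v)\,[D^{1/2};\phi]v\,dx$, and the lower-order piece from the main-term integration by parts. For the commutator term I would integrate by parts to move $\partial_x$ and then apply the commutator estimate \eqref{I2} (with $\alpha=0$, $\beta=1/2$, so $D^{\alpha}[D^{\beta};\phi]D^{1-(\alpha+\beta)} = [D^{1/2};\phi]D^{1/2}$, bounded in $L^2$ by $c\|\partial_x\phi\|_\infty$), together with $\|D^{1/2}v\|_2\lesssim\|v\|_2$ being handled by rebalancing derivatives, or more robustly by using \eqref{I6a}--\eqref{I6b} to trade the half-derivative across the commutator. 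Since each such estimate produces a constant times $\|\phi'\|_\infty$ or $\|\phi''\|_\infty$ and a factor $\|v\|_2^2$, combining them yields \eqref{A2}.

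\textbf{Expected obstacle.} The delicate point is bookkeeping the derivatives in the commutator term so that no genuine half-derivative of $v$ survives uncontrolled: naively one is left with $\|D^{1/2}v\|_2^2$, which is \emph{not} bounded by $\|v\|_2^2$. The resolution is that the structure $\hil\partial_x(D^{1/2}v)$ paired with $[D^{1/2};\phi]v$ must be integrated by parts once more to redistribute the derivative, so that the commutator absorbs the extra half-derivative via an estimate of the type \eqref{I2}; getting the precise commutator $D^{\alpha}[D^{\beta};\phi]D^{\gamma}$ into the form covered by the lemmas, while keeping the bound in terms of $\|\phi''\|_\infty$ rather than a fractional norm of $\phi$, is where the real care is required.
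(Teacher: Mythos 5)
Your factorization of the operator is incorrect, and the error propagates through the whole argument. Since $\sigma(\hil\partial_x)=|\xi|$, the operator $-D^{1/2}\hil\partial_x D^{1/2}$ has symbol $-|\xi|^{2}=-\xi^{2}$, i.e.\ it equals $-D^{2}$, which is even and self-adjoint, whereas $\hil\partial_x^{2}$ has symbol $i\,\sgn(\xi)\xi^{2}$ and is odd and antisymmetric. (In your symbol identity the middle factor $-i\,\sgn(\xi)|\xi|$ is the symbol of $\hil D=-\partial_x$, not of $\hil\partial_x$; the correct factorization is $\hil\partial_x^{2}=D^{1/2}\partial_x D^{1/2}$.) Consequently your first displayed identity is false: $\int\hil\partial_x(D^{1/2}v)\,D^{1/2}(v\phi)\,dx=\int D^{2}v\, v\phi\,dx$, which for $\phi\equiv 1$ equals $\|Dv\|_2^2$, while $\int\hil\partial_x^2v\,v\,dx=0$ by antisymmetry. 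Your ``main term'' $\int\hil\partial_x w\,\phi\,w\,dx$ with $w=D^{1/2}v$ is then $\int\phi\,(D^{1/2}w)^2\,dx+O(\|\phi'\|_\infty\|w\|_2^2)=\int\phi\,(Dv)^2\,dx+\cdots$, a full derivative squared weighted by $\phi$, not the half-derivative squared weighted by $\phi'$ appearing in \eqref{A1}; also note $\hil\partial_x=D$ is symmetric, not antisymmetric, so the integration by parts you invoke for it does not apply.

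Even after correcting the middle operator to $\partial_x$, the splitting $D^{1/2}(v\phi)=\phi\,D^{1/2}v+[D^{1/2};\phi]v$ does not isolate the main term: the first piece gives $\int\partial_x(D^{1/2}v)\,\phi\,D^{1/2}v\,dx=-\tfrac12\int(D^{1/2}v)^2\phi'\,dx$, only half of the right-hand side of \eqref{A1}, so the commutator piece $\int\partial_x(D^{1/2}v)\,[D^{1/2};\phi]v\,dx$ is not an error term --- it must carry the other half and therefore cannot satisfy a bound of the form \eqref{A2}. Derivative counting confirms this: that integral has $3/2$ derivatives outside the commutator, while \eqref{I2} with $\beta=1/2$ only tolerates outer operators of total order $1-\beta=1/2$; the obstacle you flag at the end is real, and no further integration by parts can fix a mismatch in total derivative count. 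The paper's proof sidesteps both problems by integrating by parts in $x$ first, $\int\hil\partial_x^2v\,v\phi=-\int\hil\partial_xv\,\partial_xv\,\phi-\int\hil\partial_xv\,v\,\phi'$: the entire main term comes from the second integral via $\hil\partial_x=D=D^{1/2}D^{1/2}$ and a single application of \eqref{I2} with $\alpha=\beta=1/2$ (the weight is already $\phi'$, so only $\phi''$ enters the error), while the first integral is purely an error term, handled by the antisymmetry of $\hil$ together with Calder\'on's commutator estimate \eqref{I1} with $j=k=1$, which absorbs both derivatives against $\|\phi''\|_\infty$.
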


\begin{proof} We write
\begin{equation*}
\begin{split}
\int \hil\partial_x^2v v\phi&= -\int \hil \partial_x v \partial_xv \phi-\int \hil\partial_x v v \phi'\\
&=M_1(t)+M_2(t).
\end{split}
\end{equation*}

On one hand, we get
\begin{equation}
\label{001}
\begin{split}
M_1(t) &=\int\partial_xv \hil(\partial_xv\phi) = \int \partial_x v [\hil;\phi]\partial_x v +\int  \partial_x v\,\hil\partial_x v\, \phi\\
&= -\int v\partial_x [\hil;\phi]\partial_x v-M_1(t).
\end{split}
\end{equation}
Then 
\begin{equation*}
M_1(t)=-\frac12 \int v\partial_x [\hil;\phi]\partial_x v.
\end{equation*}
By using \eqref{I1} we deduce that
\begin{equation*}
|M_1(t)|\le c\|\phi''\|_{\infty} \|v\|_2^2.
\end{equation*}

On the other hand,
\begin{equation*}
\begin{split}
M_2(t)&=-\int Dv v\phi'= -\int D^{1/2}v D^{1/2}(v\phi')\\
&=-\int D^{1/2}vD^{1/2}v \,\phi'  -\int D^{1/2}v[D^{1/2};\phi']v\\
&= -\int \big(D^{1/2}v\big)^2 \phi' -\int vD^{1/2}[D^{1/2};\phi']v.
\end{split}
\end{equation*}
Applying inequality \eqref{I2} it follows that
\begin{equation*}
\| D^{1/2}[D^{1/2};\phi']v\|_2 \le c\|\phi''\|_{\infty}\|v\|_2.
\end{equation*}

Collecting the above results one gets \eqref{A1}.
\end{proof}

\section{Proof of Theorem \ref{thm1}}

\begin{proof}

First, we consider part (ii). By previous result, see Therorem A, and hypothesis  we already know that
\begin{equation}\label{e0}
|x|^{s}u\in L^{\infty}([0,T]:L^2(\R))\;\;\;\;\text{and}\;\;\;\;u\in C([0,T]:H^s(\R)).
\end{equation}

From  the local  well-posedness  theory one just needs to show 
\begin{equation*}
\exists \,t^{*}\in (t_1,t_2)\subset [0,T] \hskip10pt\text{such that}\hskip5pt u(\cdot,t^{*})\in H^{\alpha}(\R).
\end{equation*}

We shall consider 5 cases:
\begin{itemize}
\item {\bf Case $1$}: $\frac32<s<\alpha<2$,\\
\item {\bf Case $2$}: $\alpha= 2$,\\
\item {\bf Case $3$}: $2<\alpha<\frac{5}{2}$,\\
\item {\bf Case $4$}: $\frac{5}{2}\leq \alpha<3$, \\
\item {\bf Case $5$}: $3\leq \alpha<\frac{7}{2}.$
\end{itemize}

We shall carry out the details for {\bf Case 1},  $3/2<s<\alpha<2$, and sketch the main steps in the other
cases.

For $N\in \mathbb N$ we define
\begin{equation}
\label{we-N}
\ji x\jd_N=
\begin{cases}
\begin{aligned}
&\,\ji x\jd,\,\;\;\;\;\;\;\;\;\;\;\;|x|\leq N,\\
&\,3N/2,\;\;\;\;\;\;\;\;|x|\geq 2N,
\end{aligned}
\end{cases}
\end{equation}
with $\ji x\jd_N\in C^{\infty}(\R)$, even, nondecreasing for $x>0$, and  such that there exists $\,C^*>0$ independent of $N$  such that
\begin{equation}
\label{we-1}
\|\ji x\jd_N' \|_{\infty}\leq C^*.
\end{equation}
Thus,
$$
\partial_x^j \ji x\jd_N \in C^{\infty}_0(\R),\;\;\;\;\forall\,j\in \N,
$$
and a simple computation shows that for $k\in \N$ and $m\geq 0$
\begin{equation}\label{we-3}
|\partial_x^k (\ji x\jd_N^m)|\leq \frac{ C^*}{\; \ji x\jd^{(k-m)}},
\end{equation}
with $C^*=C^*(k,m)$ independent of $N$.

\subsection*{Case 1} : $3/2<s<\alpha\leq 2$.

In this case we divide the argument of proof into four steps.

\smallskip

\noindent \underline{\bf Step 1}.  Using the identity \eqref{Id1} one has that our solution $u$ satisfies
\begin{equation}\label{e1}
\partial_t (xu)+\hil\partial_x^2(xu)+2\hil\partial_xu+xu\partial_xu=0.
\end{equation}
 Hence multiplying \eqref{e1} by $xu\japa_N^{2\theta}x$ where $s=3/2+\theta$, $0<\theta\le 1/2$ one has
\begin{equation}\label{e2}
\begin{split}
&\frac12\frac{d}{dt}\int (xu)^2 \japa_{_N}^{2\theta}x\,dx +\int \hil\partial_x^2(xu) xu \japa_{_N}^{2\theta}x\,dx\\
&+2\int \hil\partial_x u (xu)\japa_{_N}^{2\theta}x\,dx+\int xu \partial_xu\, xu \japa_{_N}^{2\theta}x\,dx\\
&=\frac{d}{dt}E_1(t)+L_1(t)+F_1(t)+N_1(t)=0.
\end{split}
\end{equation}

By hypothesis after integrating in the time interval $[t_1,t_2]$ one has
\begin{equation}\label{e3}
\big|\int_{t_1}^{t_2} \frac{d}{dt}E_1(t)\,dt\big|\le C,
\end{equation}
where from now on $C$ is a constant independent of $N$.

Integration by parts leads to
\begin{equation*}
N_1(t)=-\frac13\int u^3\partial_x(x^3 \japa_{_N}^{2\theta})\,dx.
\end{equation*}
Thus, since $\theta\le 1/2$ by hypothesis \eqref{e0} it follows that
\begin{equation*}
|N_1(t)|\le c_0\|u\|_{\infty}\|\japa^{3/2}u\|_2^2 <C.
\end{equation*}

Using the general argument (Proposition \ref{GENARG}) with $v=xu$ and $\phi(x)=\japa_{_N}^{2\theta}x$ one gets that
\begin{equation*}
L_1(t)=-\int D^{1/2}(xu) D^{1/2}(xu)\,(\japa_{_N}^{2\theta}x)'\,dx +L_{11}(t)
\end{equation*}
with
\begin{equation*}
|L_{11}(t)|\le c\|(\japa_{_N}^{2\theta}x)''\|_{\infty}\|xu\|_2^2<C,
\end{equation*}
using that $\theta\le 1/2$ and \eqref{e0}. Notice that $C$ is independent of $N$.

\smallskip

Finally, we consider $F_1(t)$ and employ \eqref{Id2} to get that
\begin{equation*}
\begin{split}
F_1(t)&=2\int \hil\partial_xu\, xu\, \japa_{_N}^{2\theta}x\,dx\\
&=2\int \hil \partial_x(xu)\,xu\,\japa_{_N}^{2\theta}\,dx+2\int \hil u\,xu\,\japa_{_N}^{2\theta}\,dx\\
&=2\int D^{1/2}(xu) D^{1/2}(xu\,\japa_{_N}^{2\theta})\,dx + \int \japa_{_N}^{\theta}\hil u \japa_{_N}^{\theta}xu\,dx\\
&= 2\int D^{1/2}(xu) D^{1/2}(xu)\,\japa_{_N}^{2\theta}\,dx +2 \int xu D^{1/2} [D^{1/2}; \japa_{_N}^{2\theta}](xu)\,dx\\
&\hskip15pt + 2\int \japa_{_N}^{\theta}\hil u\, \japa_{_N}^{\theta}xu\,dx= F_{11}+F_{12}+F_{13}.
\end{split}
\end{equation*}

Using the inequality \eqref{A_2-precise} and the hypothesis \eqref{e0}
\begin{equation*}
|F_{13}(t)|\le c_0 \|\japa_{_N}^{\theta}\hil u\|_2\|\japa_{_N}^{\theta} xu\|_2< C
\end{equation*}
where $C$ is independent of $N$.

From the inequality \eqref{I2} with $p=2$, $\alpha=\beta=1/2$ it follows that
\begin{equation*}
|F_{12}(t)|\le c\|xu\|_2\|D^{1/2}[D^{1/2}; \japa_{_N}^{\theta}](xu)\|_2\le c_0\|xu\|^2_2\le C.
\end{equation*}

Collecting all the information we see that after integrating in the time interval $[t_1, t_2]$ one has that
\begin{equation*}
\begin{split}
&2\int_{t_1}^{t_2}\int (D^{1/2}(xu))^2 \japa_{_N}^{2\theta}\,dxdt -\int_{t_1}^{t_2}\int (D^{1/2}(xu))^2 (\japa_{_N}^{2\theta}x)'\,dxdt\\
&=\int_{t_1}^{t_2}\int (D^{1/2}(xu))^2 \japa_{_N}^{2\theta}\Big\{1-\frac{x (\japa_{_N}^{2\theta})'}{\japa_{_N}^{2\theta}}\Big\}\,dxdt \le C,
\end{split}
\end{equation*}
$C$ independent of $N$. 

By construction, if $\theta\in (0,1/2)$, then  $1-\dfrac{x(\japa_{_N}^{2\theta})'}{\japa_{_N}^{2\theta}}\ge C'=C'(\theta)>0$. So we conclude that
\begin{equation}
\label{e4a}
\int_{t_1}^{t_2}\int (D^{1/2}(xu))^2 \japa_{_N}^{2\theta}\,dxdt<C
\end{equation}
where $C$ is independent of $N$. Hence by the Monotone Convergence Theorem
\begin{equation}\label{e4}
\int_{t_1}^{t_2}\int (D^{1/2}(xu))^2 \japa^{2\theta}\,dxdt<C.
\end{equation}
In particular,
\begin{equation*}
D^{1/2}(xu) \japa^{\theta}\in L^2(\R) \hskip10pt\text{a.e.} \hskip5pt t\in [t_1,t_2].
\end{equation*}

Reapplying the above argument with $\japa_{_N}^{2\theta+1}$, instead of $\japa^{2\theta}x$ with $\theta\in (0,1/2)$, and using \eqref{e4},
one concludes that
\begin{align}
\japa^{1/2+\theta}xu &\in L^{\infty}([t_1,t_2]:\R), \hskip5pt\text{i.e.} \label{e5}\\
\japa^{\alpha}u &\in L^{\infty}([t_1,t_2]:\R).\label{e6}
\end{align}

We fix $t_3, t_4\in (t_1,t_2)$ with $t_3<t_4$ such that
\begin{equation}\label{e7}
D^{1/2}(xu) \japa^{\theta}(t_j)\in L^2(\R), \hskip10pt j=3,4.
\end{equation}

\noindent\underline{\bf Step 2}. As in the step 1 one needs to consider two cases : $\theta \in (0,1/2)$ and $\theta=1/2$. We shall carry out the details  only in the case $\alpha\in (0,1/2)$. Thus, we multiply the BO equation in \eqref{BO} by $x$ apply the operator $D^{1/2}$ to it, multiply the result by 
$\japa_{_N}^{2\theta-1}x D^{1/2}(xu)$, and use the identity \eqref{Id1}, (to simplify the notation from now on we remove the subindex $N$ in $\langle\cdot\rangle_{_N}$ and remark that all the estimates will hold uniformly on $N$), to get
\begin{equation}\label{e8}
\begin{split}
&\frac12\frac{d}{dt} \int (D^{1/2}(xu))^2\,\japa^{2\theta-1}x\,dx\\
&\hskip10pt+\int \hil\partial_x^2 D^{1/2}(xu) D^{1/2}(xu)\japa^{2\theta-1}x\,dx\\
&\hskip10pt+2\int D^{1/2}\partial_x\hil u D^{1/2}(xu) \japa^{2\theta-1}x\,dx\\
&\hskip10pt+\int D^{1/2}(xu\partial_xu)D^{1/2}(xu) \japa^{2\theta-1}x\,dx\\
&=\frac12 \frac{d}{dt} E_2(t)+L_2(t)+F_2(t)+N_2(t)=0.
\end{split}
\end{equation}

As before from our choice of $t_3, t_4$ we have that 
\begin{equation*}
\Big|\int_{t_3}^{t_4} \frac{d}{dt} E_2(t)\,dt\Big| \le C.
\end{equation*}

Next, we consider $N_2(t)$ written in the case as
\begin{equation*}
N_2(t)=\int D^{1/2}(xu\partial_xu)\japa^{\theta}\,D^{1/2}(xu)\japa^{\theta}\frac{x}{\japa}\,dx
\end{equation*}

We shall employ  that
\begin{equation*}
D^{1/2}(xu\partial_xu)\japa^{\theta}=D^{1/2}(\japa^{\theta}xu\partial_xu)-[D^{1/2};\japa^{\theta}](xu\partial_xu)
\end{equation*}
where by the estimates \eqref{I6a} and \eqref{imp-1}
\begin{equation*}
\begin{split}
\|[D^{1/2};\japa^{\theta}](xu\partial_xu)\|_2&\le c\| D^{1/2}(\japa^{\theta})\|_{\infty}\|xu\partial_xu\|_2\\
&\le c\|xu\|_2\|\partial_xu\|_{\infty} < C.
\end{split}
\end{equation*}
by hypothesis uniformly bounded in the time interval $[0,T]$, and by \eqref{I5b}
\begin{equation*}
\begin{split}
\|D^{1/2}&(\japa^{\theta}xu\partial_xu)\|_2 \\
&\le c\big(\|\partial_xu\|_{\infty} \|D^{1/2}(\japa^{\theta}xu)\|_2
+\|\japa^{\theta}xu\|_{\infty^{-}}\|D^{1/2}\partial_x u\|_{2^{+}}\big)\\
&\le c\|D^{1/2}(\japa^{\theta}xu)\|_2\|u\|_{s,2}.
\end{split}
\end{equation*}
Notice that $\|u\|_{s,2}$ is uniformly bounded in $[0,T]$ and that
\begin{equation*}
D^{1/2}(\japa^{\theta}xu)=\japa^{\theta} D^{1/2}(xu)+[D^{1/2};\japa^{\theta}](xu).
\end{equation*}

Hence from \eqref{e4}, collecting the above argument one has that
\begin{equation*}
\int_{t_3}^{t_4} |N_2(t)|\,dt <C.
\end{equation*}

Next, by the general argument  (Proposition \ref{GENARG} with $v=D^{1/2}(xu)$ and $\phi=\japa^{2\theta-1}x$ one gets that
\begin{equation*}
L_2(t)=-\int (D^{1/2}(D^{1/2}(xu))^2\,(\japa^{2\theta-1}x)'\,dx + L_{21}(t)
\end{equation*}
with
\begin{equation*}
|L_{21}(t)|\le \|(\japa^{2\theta-1}x)''\|_{\infty}\|v\|_2^2\le c_0\|D^{1/2}(xu)\|_2^2.
\end{equation*}
Hence from \eqref{e4}
\begin{equation*}
\int_{t_3}^{t_4} |L_{21}(t)|\,dt <C.
\end{equation*}

Finally, we consider $F_2(t)$ in \eqref{e8}. Using \eqref{Id4}
\begin{equation*}
\begin{split}
F_2(t)&=2\int xD^{1/2}Du\,D^{1/2}(xu) \,\japa^{2\theta-1}\,dx\\
&= 2\int D^{3/2}(xu) D^{1/2}(xu)\japa^{2\theta-1}\,dx+3\int D^{1/2}\hil u D^{1/2}(xu)\japa^{2\theta-1}\,dx\\
&=2\int D(xu) D(xu) \japa^{2\theta-1}\,dx+2\int D(xu)\,[D^{1/2};\japa^{2\theta-1}] D^{1/2}(xu)\,dx\\
&\hskip10pt +3\int D^{1/2}\hil u D^{1/2}(xu)\japa^{2\theta-1}\,dx\\
&= F_{21}(t)+F_{22}(t)+F_{23}(t).
\end{split}
\end{equation*}
where
\begin{equation*}
\begin{split}
F_{22}(t)&= 2\int D(xu)\,[D^{1/2};\japa^{2\theta-1}] D^{1/2}(xu)\,dx\\
&= 2\int D^{1/2}(xu)\,D^{1/2}[D^{1/2};\japa^{2\theta-1}] D^{1/2}(xu)\,dx.
\end{split}
\end{equation*}
Then from inequality \eqref{I2} we deduce 
\begin{equation*}
|F_{22}(t)|\le c_0\|(\japa^{2\theta-1})' \|_{\infty}\|D^{1/2}(xu)\|_2^2
\end{equation*}
and thus
\begin{equation*}
\int_{t_3}^{t_4} |F_{22}(t)|\,dt < C.
\end{equation*}

Also
\begin{equation*}
|F_{23}(t)|\le c\|D^{1/2}u\|_2\|D^{1/2}(xu)\|_2
\end{equation*}
and consequently
\begin{equation*}
\int_{t_3}^{t_4} |F_{23}(t)|\,dt < C.
\end{equation*}

\smallskip

Collecting the above information we can conclude that after integrating in the time interval $[t_3, t_4]$ one gets
\begin{equation*}
\int_{t_3}^{t_4}\int(D(xu))^2\,\japa^{2\theta-1}\, \Big\{ 1-\frac{(\japa^{2\theta-1})'x}{\japa^{2\theta-1}}\Big\}\,dxdt.
\end{equation*}
It is easy to see that $1-\frac{(\japa^{2\theta-1})'x}{\japa^{2\theta-1}}\ge c_0>0$. Hence we conclude that
\begin{equation}\label{e10}
\int_{t_3}^{t_4}\int(D(xu))^2\,\japa^{2\theta-1}\,dxdt<C.
\end{equation}

\smallskip

Now, it follows from  \eqref{I3} that
\begin{equation*}
(D(xu))\,\japa^{\theta-1/2}\in L^2(\R) \hskip10pt\text{implies}\hskip10pt x\japa^{\theta-1/2}\partial_x u\in L^2(\R).
\end{equation*}
Hence we can conclude that
\begin{equation}\label{e10b}
\int_{t_3}^{t_4}\int \big(\japa^{\theta+1/2}\partial_x u\big)^2\,dxdt<C.
\end{equation}

Reapplying the above argument with $\japa^{2\theta}$ instead of $\japa^{2\theta-1}x$ and employing the result \eqref{e10} one deduces
that
\begin{equation*}
D^{1/2}(xu)\japa^{\theta}\in L^{\infty}([t_3,t_4]: L^2(\R)).
\end{equation*}

\smallskip

Next, we fix $t_5, t_6\in  [t_3, t_4]$, $t_5<t_6$, such that
\begin{equation}\label{e10a}
\japa^{\theta+1/2}\partial_xu(x, t_j)\in L^2(\R), \hskip10pt j=5,6.
\end{equation}

\smallskip

\noindent\underline{\bf Step 3}.   As in the previous steps one needs to consider two cases : $\theta \in (0,1/2)$ and $\theta=1/2$. We shall carry out the details  only in the case $\alpha\in (0,1/2)$. Consider  the BO equation after applying the operator $\japa^{2\theta}\partial_x$, multiply the
result by $\japa^{\theta}\partial_xu x$ and integrate the result to get
\begin{equation}\label{e11}
\begin{split}
&\frac12\frac{d}{dt}\int \big(\japa^{\theta}\partial_xu)^2x\,dx+\int \japa^{\theta}\partial_x\hil\partial_x^2u \japa^{\theta}\partial_xu x\,dx\\
&\hskip10pt+\int \japa^{\theta}\partial_x(u\partial_x u)\japa^{\theta}\partial_x u\, x\,dx\\
&=\frac12\frac{d}{dt}E_3(t)+L_3(t)+N_3(t)=0.
\end{split}
\end{equation}

After integrating in the time interval $[t_5, t_6]$, using \eqref{e10a} one has that
\begin{equation*}
\Big|\frac{d}{dt}\int_{t_5}^{t_6} E_3(t)\,dt\Big| < C.
\end{equation*}

Next, to bound $N_3(t)$ we write
\begin{equation*}
\begin{split}
N_3(t)&=\int \japa^{\theta} \partial_x(u\partial_xu)\,\japa^{\theta}\partial_xu\,dx\\
&=\int \japa^{\theta}\partial_xu \partial_xu \japa^{\theta}\partial_x u\, x\,dx+
\int \japa^{\theta}\partial_x^2u\,u\japa^{\theta}\partial_xu\,x\,dx\\
&=\int \japa^{2\theta}x(\partial_x u)^3\,dx +\frac12 \int \partial_x\big((\partial_x u)^2) \japa^{2\theta} xu\,dx\\
&=\frac12 \int \japa^{2\theta}x(\partial_xu)^3-\frac12\int (\partial_x u)^2u (\japa^{2\theta}x)'\,dx\\
&=N_{31}(t)+N_{32}(t).
\end{split}
\end{equation*}

We observe that
\begin{equation*}
|N_{31}(t)|\le c\|\partial_xu\|_{\infty}\|\japa^{\theta+1/2}\partial_xu\|_2
\end{equation*}
which after integrating in the time interval $[t_5, t_6]$ one has from \eqref{e10b} and the hypothesis $s>3/2$ that
\begin{equation*}
\int_{t_5}^{t_6} |N_{31}(t)|\,dt < C.
\end{equation*}
Similarly for $N_{32}(t)$ since
\begin{equation*}
|N_{32}(t)|\le c\|\japa^{2\theta}(\partial_x u)^2u\|_2\le c\|u\|_{\infty}\|\japa^{\theta}\partial_x u\|_2^2.
\end{equation*}

\smallskip

Finally, we consider the contribution of $L_3(t)$. From the general argument, Proposition \ref{GENARG}, with $v=\partial_x u$ and $\phi=\japa^{2\theta}x$,
\begin{equation*}
\begin{split}
L_3(t)&=\int \hil\partial_x^2\partial_xu\,\partial_xu \japa^{2\theta}x\,dx\\
&=-\int (D^{1/2}\partial_xu)^2(\japa^{2\theta}x)'\,dx +L_{31}(t).
\end{split}
\end{equation*}
where
\begin{equation*}
|L_{31}(t)|\le \|(\japa^{2\theta}x)''\|_{\infty}\|\partial_xu\|_2^2<C
\end{equation*}
uniformly in $t$  (recall the hypotheses $\theta<1/2$ and $s>3/2$).

Since $(\japa^{2\theta}x)'\simeq \japa^{2\theta}$ collecting the above information we conclude that
\begin{equation}\label{e15}
\int_{t_5}^{t_6}\int (D^{1/2}\partial_x u)^2 \japa^{2\theta}\,dxdt< C,
\end{equation}
i.e.
\begin{equation}\label{e16}
D^{1/2}\partial_x u \japa^{\theta}\in L^2(\R)\hskip5pt \text{a.e.}\hskip5pt t\in [t_5,t_6].
\end{equation}

Reapplying the argument with $\japa^{2\theta+1}$ instead of $\japa^{2\theta}x$ and using \eqref{e15}
\begin{equation}\label{e17}
\japa^{\theta+1/2}\partial_xu\in L^{\infty}([t_5,t_6]:L^2(\R)).
\end{equation}

\smallskip

We fix $t_7,t_8\in [t_5,t_6]$ such that \eqref{e16} holds for $t_7, t_8$.

\smallskip

\noindent\underline{\bf Step 4}.  As in the step 1 one needs to consider two cases : $\theta \in (0,1/2)$ and $\theta=1/2$. We shall carry out the details  only in the case $\alpha\in (0,1/2)$. We apply the operator $D^{1/2}\partial_x$ to the equation in \eqref{BO}, multiply the result by 
$\japa^{2\theta-1}x D^{1/2}\partial_xu$ and integrate the result to get that
\begin{equation}\label{e18}
\begin{split}
&\frac12\frac{d}{dt}\int (D^{1/2}\partial_x u)^2\japa^{2\theta-1}x\,dx \\
&\hskip10pt +\int D^{1/2}\partial_x\hil\partial_x^2u D^{1/2}\partial_xu\japa^{2\theta-1}x\,dx\\
&\hskip10pt + \int D^{1/2}\partial_x(u\partial_xu)D^{1/2}\partial_xu \japa^{2\theta-1}x\,dx\\
&=\frac12\frac{d}{dt}E_4(t) +L_4(t)+N_4(t)=0.
\end{split}
\end{equation}

\smallskip

From our choice of $t_7, t_8$ one has that
\begin{equation*}
\Big|\int_{t_7}^{t_8} \frac{d}{dt} E_4(t)\,dt\Big| < C.
\end{equation*}

To control the contribution of $N_4(t)$ we write
\begin{equation*}
N_4(t)=\int \japa^{\theta} D^{1/2}\partial_x(u\partial_xu)\japa^{\theta}D^{1/2}\partial_xu\,\frac{x}{\japa}\,dx
\end{equation*}
and use that
\begin{equation*}
\japa^{\theta}D^{1/2}f=D^{1/2}(\japa^{\theta}f)-[D^{1/2};\japa^{\theta}]f.
\end{equation*}
Thus
\begin{equation}\label{e19}
\begin{split}
N_4(t)&=\int D^{1/2}(\japa^{\theta}\partial_x(u\partial_x u))\, D^{1/2}(\japa^{\theta}\partial_xu)\frac{x}{\japa}\,dx\\
&\hskip10pt-\int [D^{1/2};\japa^{\theta}]\partial_x(u\partial_x u)\, D^{1/2}(\japa^{\theta}\partial_xu)\frac{x}{\japa}\,dx\\
&\hskip10pt-\int D^{1/2}(\japa^{\theta}\partial_x(u\partial_xu)\, [D^{1/2};\japa^{\theta}]\partial_x u \frac{x}{\japa}\,dx\\
&\hskip10pt+\int [D^{1/2};\japa^{\theta}]\partial_x(u\partial_xu)\,[D^{1/2};\japa^{\theta}]\partial_x u \frac{x}{\japa}\,dx\\
&=N_{41}+N_{42}+N_{43}+N_{44}.
\end{split}
\end{equation}

Using the estimates \eqref{I2} with $\alpha=1/2$, $\eta=\japa^{\theta}$, and $p=2$ we get the appropriated
bounds for $N_{42}$ and $N_{44}$, that is,
\begin{equation*}
\begin{split}
\|[D^{1/2};\japa^{\theta}]\partial_x(u\partial_x u)\|_2&\le c_0\|D^{1/2}\hil(u\partial_x u)\|_2\\
&\le c_0\|D^{1/2}(u\partial_x u)\|_2\le c\|u\|_{s,2}^2
\end{split}
\end{equation*}
and $\|D^{1/2}(\japa^{\theta}\partial_xu)\|_2$ bounded after integrating in the time interval $[t_7, t_8]$.

\smallskip

To control $N_{43}$ we write
\begin{equation*}
\begin{split}
N_{43}&= \int D^{1/2}((\japa^{\theta})')u\partial_xu)\, [D^{1/2};\japa^{\theta}]\partial_xu \,\frac{x}{\japa}\,dx\\
&\hskip15pt + \int D^{1/2}(\partial_x(\japa^{\theta}u\partial_xu))\,[D^{1/2};\japa^{\theta}]\partial_xu\, \frac{x}{\japa}\,dx\\
&= N_{431}+ N_{432}.
\end{split}
\end{equation*}

A familiar argument yields
\begin{equation*}
N_{431}\le \|D^{1/2}(u\partial_x u)\|_2\|\partial_x u\|_2\le C\|\partial_x u\|_2 \|u\|_{s}^2
\end{equation*}
and
\begin{equation*}
\begin{split} 
N_{432}&= \int D^{1/2}(\japa^{\theta}u\partial_xu)\, \partial_x\big( [D^{1/2};\japa^{\theta}]\partial_xu \, \frac{x}{\japa}\big)\,dx\\
&= \int D^{1/2}(\japa^{\theta}u\partial_xu)\,[D^{1/2};\japa^{\theta}]\partial_xu \big(\frac{x}{\japa}\big)'\,dx\\
&\hskip10pt +\int D^{1/2}(\japa^{\theta}u\partial_x u) \Big\{ [D^{1/2}; (\japa^{\theta})'] \partial_xu\\
&\hskip25pt +[D^{1/2};\japa^{\theta}] D^{1/2}D^{1/2}\hil \partial_xu\Big\}\, \frac{x}{\japa}\,dx\\
&= N_{4321}+ N_{4322}+N_{4323}.
\end{split}
\end{equation*}

Now
\begin{equation}\label{e20}
\begin{split}
&|N_{4321}| \le \|D^{1/2}(\japa^{\theta}u \,\partial_xu)\|_2\|D^{1/2}u\|_2\\
&\le \big(\|[D^{1/2};\japa^{\theta}u]\partial_xu\|_2+\|\japa^{\theta}u D^{1/2}\partial_xu\|_2\big)\|D^{1/2}u\|_2\\
&\le \big(c_0\|\japa^{\theta}u\|_{\infty}\|D^{1/2}\partial_xu\|_2+\|\japa^{\theta}u\|_{\infty}\|D^{1/2}\partial_xu\|_2\big)\|D^{1/2}u\|_2\\
&\le c\|D^{1/2}u\|_2\|u\|_{3/2,2}\|\japa^{\theta}u\|_{1,2} <C
\end{split}
\end{equation}
(see \eqref{e17} and hypothesis $s>3/2$).

The boundedness  of $N_{4322}$ can be obtained using a similar argument. Following the argument leading to \eqref{e20}
yields
\begin{equation*}
|N_{4323}|\le c\|D^{1/2}(\japa^{\theta}u\partial_xu)\|_2\|D^{1/2}\partial_xu\|_2 <C.
\end{equation*}

\smallskip

Finally, we turn to $N_{41}$ in \eqref{e19}.

\begin{equation*}
\begin{split}
N_{41}(t)&=\int D^{1/2}(\japa^{\theta}u\partial_x^2 u) \,D^{1/2}(\japa^{\theta}\partial_xu) \frac{x}{\japa}\,dx\\
&\hskip15pt+\int D^{1/2}(\japa^{\theta}\partial_xu\partial_xu) D^{1/2}(\japa^{\theta}\partial_xu) \frac{x}{\japa}\,dx\\
&=N_{411}+N_{412}
\end{split}
\end{equation*}
where the use of the commutator estimates  lead to
\begin{equation*}
\|D^{1/2}(\japa^{\theta}u\partial_x u)\|_2\le c\|\partial_xu\|_{\infty}\|D^{1/2}(\japa^{\theta}\partial_xu)\|_2.
\end{equation*}
Hence
\begin{equation*}
|N_{412}|\le \|u\|_{s,2}\|D^{1/2}(\japa^{\theta}\partial_xu)\|_2^2\le \|u\|_{s,2}
\|D^{1/2}(\japa^{\theta}\partial_xu)\|_2^2
\end{equation*}
since $\|u\|_{s,2}$ is uniformly bounded in $[0,T]$ after integrating in the time interval $[t_7, t_8]$, using the previous step \eqref{e15} and a familiar inequality we deduce that
\begin{equation*}
\int_{t_7}^{t_8}|N_{412}(t)|\,dt < C.
\end{equation*}

\smallskip

We rewrite $N_{411}$ as
\begin{equation*}
\begin{split}
N_{411}(t)&=\int D^{1/2}(\japa^{\theta}u\partial_x^2 u) \,D^{1/2}(\japa^{\theta}\partial_xu) \frac{x}{\japa}\,dx\\
&=-\int D^{1/2}(u(\japa^{\theta})'\partial_x u)D^{1/2}(\japa^{\theta}\partial_xu) \frac{x}{\japa}\,dx\\
&\hskip10pt +\int D^{1/2}(u\partial_x(\japa^{\theta}\partial_xu))\, D^{1/2}(\japa^{\theta}\partial_xu) \frac{x}{\japa}\,dx\\
&=N_{4111}+N_{4112}.
\end{split}
\end{equation*}

The term $N_{4111}$ is bounded by
\begin{equation*}
\|D^{1/2}(u\partial_xu)\|_2\|D^{1/2}(\japa^{\theta}\partial_xu)\|_2\le c\|u\|_{s,2}^2\|D^{1/2}(\japa^{\theta}\partial_xu)\|_2
\end{equation*}
which after integration in the time interval $[t_7,t_8]$ is uniformly bounded.

\smallskip

It remains to consider
\begin{equation*}
\begin{split}
N_{4112}(t)&= \int D^{1/2}(u\partial_x(\japa^{\theta}\partial_xu))D^{1/2}(\japa^{\theta}\partial_xu)\frac{x}{\japa}\,dx\\
&=\int u \partial_xD^{1/2}(\japa^{\theta}\partial_xu)\, D^{1/2}(\japa^{\theta}\partial_xu)\frac{x}{\japa}\,dx\\
&\hskip10pt+\int [D^{1/2};u]\partial_x(\japa^{\theta}\partial_xu)D^{1/2}(\japa^{\theta}\partial_xu)\frac{x}{\japa}\,dx\\
&=\frac12 \int u \partial_x\big(D^{1/2}(\japa^{\theta}\partial_xu)\big)^2 \frac{x}{\japa}\,dx\\
&\hskip10pt +\int [D^{1/2};u]D^{1/2}\hil D^{1/2}(\japa^{\theta}\partial_xu) D^{1/2}(\japa^{\theta}\partial_xu)\frac{x}{\japa}\,dx\\
&=-\frac12 \int \partial_x\Big(u\frac{x}{\japa}\Big) (D^{1/2}(\japa^{\theta}\partial_xu)\big)^2\,dx\\
&\hskip10pt +\int[D^{1/2};u] D^{1/2}\hil D^{1/2}(\japa^{\theta}\partial_xu) D^{1/2}(\japa^{\theta}\partial_xu)
\frac{x}{\japa}\,dx
\end{split}
\end{equation*}
which is bounded by
\begin{equation*}
\big(\|u\|_{\infty}+\|\partial_x u\|_{\infty}\big)\|D^{1/2}(\japa^{\theta}\partial_xu)\|_2^2
\le \|u\|_{s,2}\|D^{1/2}(\japa^{\theta}\partial_xu)\|_2^2.
\end{equation*}
which after integrating in the time interval is bounded.

\smallskip

It remains to consider $L_4(t)$ in \eqref{e18} which by the general argument, Proposition \ref{GENARG}  with
 $v=D^{1/2}\partial_xu$, $\phi=\japa^{2\theta-1}x$
 \begin{equation*}
 \begin{split}
 L_4(t)&=\int D^{1/2}\partial_x\hil \partial_x^2u D^{1/2}\partial_xu \japa^{2\theta-1}x\,dx\\
&= \int (D\partial_xu)^{2}(\japa^{2\theta-1}x)'\,dx +L_{41}(t).
\end{split}
\end{equation*}
with
\begin{equation*}
|L_{41}(t)|\le c\|(\japa^{2\theta-1}x)''\|_{\infty}\|D^{1/2}\partial_xu\|_2 \le c_0\|u\|_{s,2}.
\end{equation*}
Hence after integrating in $[t_7, t_8]$ one gets
\begin{equation*}
\int_{t_7}^{t_8} \int (D\partial_xu)^{2}(\japa^{2\theta-1}x)'\,dxdt <C.
\end{equation*}
That is,
\begin{equation*}
D\partial_xu \japa^{\theta-1/2}\in L^2(\R) \hskip10pt\text{a.e.}\hskip10pt t\in [t_7,t_8].
\end{equation*}

From the continuity properties of the Hilbert transform ($\hil^2=-I$)
\begin{equation*}
D\partial_xu \japa^{\theta-1/2}=\hil \partial_x^2u\japa^{\theta-1/2}\in L^2(\R).
\end{equation*}
This implies that $\partial_x^2u\japa^{\theta-1/2}\in L^2(\R)$ and $\japa^{\alpha} u\in L^2(\R)$, with $\alpha=3/2+\theta$,
by using interpolation.
Therefore, $\japa^{\alpha-2}J^2u\in L^2(\R)$ and $\japa^{\alpha}u\in L^2(\R)$.

\smallskip

Finally, by interpolation in \eqref{interpol-complete} we conclude  that $J^{\alpha}u\in L^2(\R)$. 
 
 This completes {\bf Case 1}.
\vskip.2in
 
\noindent {\bf Case 2} : $\alpha=2$. It has four steps:

\vskip1mm

 \noindent \underline{\bf Step 1}. Multiplying \eqref{e1} by $x^3u\japa$ and integrating the result  one has
\begin{equation}\label{e=2}
\begin{split}
&\frac12\frac{d}{dt}\int x^3 u^2 \japa\,dx +\int \hil\partial_x^2u\, x^3u\, \japa_{_N}\,dx
+\int u \partial_xu\, x^3u \japa\,dx\\
&=\frac{d}{dt}\tilde E_1(t)+\tilde L_1(t)+\tilde N_1(t)=0.
\end{split}
\end{equation}

The estimates for the terms $\frac{d}{dt}\tilde E_1(t)$ and $\tilde N_1(t)$ are similar to those given above for the case $\theta \in (0,1/2)$
so we just need to handle $\tilde L_1(t)$. Thus, we write
\begin{equation*}
\begin{split}
\tilde L_1(t)&= -\int x \hil\partial_xu\,x\partial_xu\,x \japa\,dx -\int \hil \partial_x u\,u\,(x^3 \japa)'\,dx\\
&=- \tilde L_{11}(t) - \tilde L_{12}(t).
\end{split}
\end{equation*}
 Then, using \eqref{Id2}  one has 
 \begin{equation*}
\begin{split}
-\tilde L_{11}(t)&=-\int(\hil \partial_x(xu)+\hil u)(\partial_x(xu)-u)x\,\japa\,dx\\
&=-\int \hil \partial_x(xu)\,\partial_x(xu)\,x\,\japa\,dx + \int\hil \partial_x(xu)\,ux\,\japa\,dx\\
&\hskip12pt -\int \hil u\,\partial_x(xu)\,x\,\japa\,dx+\int \hil u\, u\,x\,\japa\,dx\\
&= -\tilde L_{111}(t)+\tilde L_{112}(t)-\tilde L_{113}(t)+\tilde L_{114}(t).
\end{split}
\end{equation*}
Now
\begin{equation*}
\begin{split}
-\tilde L_{111}(t)&=\int \partial_x(xu)\,\hil(\partial_x(xu)\,x )\japa\,dx \\
&=\int \partial_x(xu)\,\hil\partial_x(xu)\,x\,\japa\,dx + \int \partial_x(xu)\,[\hil;x\,\japa]\partial_x(xu)\,dx\\
&=\tilde L_{111}(t)-\int xu\,\partial_x[\hil;x\,\japa]\partial_x(xu)\,dx\\
\end{split}
\end{equation*}
thus by \eqref{I1}
$$
|\tilde L_{111}(t)|\leq C \|xu\|_2^2,\;\;\;\;\;\;\;\;C\;\;\text{independent of}\;\;\;N.
$$
Next, we write
$$
\tilde L_{112}(t)=\int D(xu)\,xu\,\japa\,dx 
$$ 
and by integration by parts and \eqref{Id2}
\begin{equation*}
\begin{split}
-\tilde L_{113}(t)&=\int \hil \partial_xu\,(xu)\,x\japa\,dx+\int \hil u\,xu\,(x\japa)'\,dx\\
&=\int D(xu)\,xu\,\japa\,dx+\int \hil u\,xu\,(x(\japa)'+\japa)\,dx
\end{split}
\end{equation*}
Noticing that from \eqref{A_2-precise} if $2=s+\gamma, \gamma\in (0,1/2)$ one has 
$$
|\int \hil u \,u\,x\japa\,dx|\leq \| \japa^{\gamma} \hil u\|_2 \|\japa^s u\|_2\leq  \|\japa^s u\|_2^2\leq C,
$$
one has that
$$
-\tilde L_{113}(t)=\int D(xu)\,xu\,\japa\,dx + N_1(t),
$$
with 
$$
|N_1(t)|+|\tilde L_{114}(t)|\leq C\;\;\;\;\;\;\;\;C\;\;\text{independent of}\;\;\;N.
$$
Finally, one sees by a familiar argument that
$$
-\tilde L_{12}(t)=-\int D(xu)\,xu\,(x^3\japa)'/x^2\,dx + N_2(t),
$$
with $|N_2(t)| $ uniformly bounded in the time interval $[0,T]$.

Hence, collecting the above information one sees that
$$
\tilde L_1(t)= -\int D(xu)\,xu\,\big((x^3 \japa)'/x^2-2\japa\big)\,dx+N_3(t),
$$
with $|N_3(t)| $ uniformly bounded in the time interval $[0,T]$. Since
$$
\big((x^3 \japa)'/x^2-2\japa\big)\geq 1,
$$
we have the same result as in \eqref{e4a}-\eqref{e6} with $\theta=1/2$.

The reminder three steps in the proof of {\bf Case 2} are similar to those described above, so it will be omitted.
\medskip

 \vskip.2in
 
\noindent {\bf Case 3} : $2<\alpha<2+1/2$. It has five steps.

\noindent\underline{\bf Step 1}. By  {\bf Case 2}   we know that
\begin{equation}
\label{e00}
|x|^{2}u\in L^{\infty}([0,T]:L^2(\R))\;\;\;\;\text{and}\;\;\;\;u\in C([0,T]:H^2(\R)).
\end{equation}
and by hypothesis  $ |x|^{2+\theta}u(\cdot, t_j)\in L^2(\R),\; j=1,2$ with $\alpha=2+\theta,\,\theta\in(0,1/2)$.

\medskip

We multiply the BO equation by $x^2 \japa^{2\theta-1}xu$ and integrate the result to get
\begin{equation*}
\frac12\frac{d}{dt}\int\!\! x^4 u^2 \japa^{2\theta-1}xdx +\int\! \hil\partial_xu\, u\,x^4 \japa^{2\theta-1}xdx
+\int\! \u \partial_xu\,u\,x^4 \japa^{2\theta-1}dx=0.
\end{equation*}

From \eqref{e00} and hypothesis we have that after integrating in the time interval $[t_1,t_2]$ the contribution of the first and third terms above are bounded for $\theta\in(0,1/2)$. So it remains to consider the second one. Using \eqref{Id1b} we write
$$
\begin{aligned}
&\int \hil\partial_xu\, u\,x^4 \japa^{2\theta-1}xdx=
\int \hil\partial_x(x^2u)\, x^2u\, \japa^{2\theta-1}xdx\\
&+4\int \hil\partial_x(xu)\, x^2u\, \japa^{2\theta-1}xdx
+2\int \japa^{2\theta-1} x \hil u\, u\,x^2 dx\\
&=L_1(t)+L_2(t)+L_3(t).
\end{aligned}
$$

A familiar argument shows that the contribution of the terms $L_1(t)$ and $L_2(t)$ can be written by a bounded term plus a multiple of
$$
\int (D^{1/2}(x^2u))^2\,\japa^{2\theta-1}\,dx.
$$
To handle $L_3(t)$ we write:
\begin{equation}
\label{term-a}
|L_2(t)|\leq c\|\japa^{2\theta-1} x \hil u\|_2\| u\,x^2\|_2,
\end{equation}
which is bounded only for $\theta\in (0,1/4)$, see \eqref{A_2-precise} and \eqref{e00}. Collecting the above information and following a familiar argument we can conclude that
\begin{equation}
\label{term-b}
|x|^{2+\beta}u\in L^{\infty}([t_1,t_2]:L^2(\R)),\;\;\beta\in(0,1/4).
\end{equation}
Reapplying the above argument using \eqref{term-b} instead of \eqref{e00}. one obtains that the term in \eqref{term-a} 
is bounded for $\theta\in (0,3/8)$. This iteration allows to extends the result to $\alpha\in(0,1/2)$. Notice that by Theorem A part (2) the result for $\alpha=1/2$ is false without the hypothesis $\int u_0(x)\,dx=0$.

The rest of the proof in this case follows a similar argument to the one provided in details in the {\bf Case 1}, so it will be omitted.

\medskip

\noindent{\bf Case 4} : $2+1/2\leq\alpha\leq 3$. It has six steps.

In this case we apply for $xu(x,t)$ the argument given in the {\bf Case 1} for $u(x,t)$.

\medskip

\noindent{\bf Case 5} : $3<\alpha<3+1/2$. It has seven steps.

\noindent\underline{\bf Step 1}. By  {\bf Case 4}  we have that
\begin{equation}
\label{e000}
|x|^{3}u\in L^{\infty}([0,T]:L^2(\R))\;\;\;\;\text{and}\;\;\;\;u\in C([0,T]:H^3(\R)).
\end{equation}
and by hypothesis  $ |x|^{3+\theta}u(\cdot, t_j)\in L^2(\R),\; j=1,2$ with $\alpha=3+\theta,\,\theta\in(0,1/2)$.

\medskip

We multiply the BO equation by $x^3 \japa^{2\theta-1}xu$ and integrate the result to get
$$
\frac12\frac{d}{dt}\int \!\!x^6 u^2 \japa^{2\theta-1}xdx +\int \!\hil\partial_x^2u\, u\,x^6 \japa^{2\theta-1}xdx
+\int \! u \partial_xu\,u\,x^6 \japa^{2\theta-1}dx=0.
$$

From \eqref{e000} and the hypothesis we have that after integrating in the time interval $[t_1,t_2]$ the contribution of the first and third terms above are bounded for $\theta\in(0,1/2)$. So it remains to consider the second one. Using \eqref{Id1c} we write
$$
\begin{aligned}
\int \hil\partial_x^2u&\, u\,x^6 \japa^{2\theta-1}xdx=
\int \hil\partial_x(x^3u)\, x^3u\, \japa^{2\theta-1}xdx\\
&\hskip10pt +6\int \hil\partial_x^2(x^2u)\, x^3u\, \japa^{2\theta-1}xdx
+6\int \japa^{2\theta-1} x\, \hil (xu)\, u\,x^3 dx\\
&\hskip10pt +\big(\int u(x,t)dx\big) \int \,x^3u\, \japa^{2\theta-1}x\,dx\\
&=L_1(t)+L_2(t)+L_3(t)+L_4(t).
\end{aligned}
$$

By hypothesis we have the $L_4(t)\equiv 0$ and a familiar argument shows that the  terms $L_1(t)$ and $L_2(t)$ can be written as a bounded term plus a multiple of
$$
\int (D^{1/2}(x^3u))^2\,\japa^{2\theta-1}\,dx.
$$
To handle $L_3(t)$ we write:
\begin{equation}
\label{term-c}
|L_3(t)|\leq c\|\japa^{2\theta-1} x \hil (xu)\|_2\| u\,x^3\|_2,
\end{equation}
which is bounded only for $\theta\in (0,1/4)$, see \eqref{A_2-precise} and \eqref{e00}. Collecting the above information and following our argument we can conclude that
\begin{equation}
\label{term-d}
|x|^{3+\beta}u\in L^{\infty}([t_1,t_2]:L^2(\R)),\;\;\beta\in(0,1/4).
\end{equation}
Reapplying the above argument using \eqref{e000} instead of \eqref{e00} one obtains that the term in \eqref{term-c} 
is bounded for $\theta\in (0,3/8)$. This iteration allows to extends the result to $\alpha\in(0,1/2)$. Notice that by Theorem A part (4) the result for $\alpha=1/2$ fails.

The rest of the proof in this case follows a similar argument to the one provided in details in the {\bf Case 1}, so it will be omitted.
\smallskip

\end{proof}


\end{document}